\documentclass[11pt]{amsart}
\usepackage{amssymb, amscd, verbatim, amsmath, MnSymbol}
\usepackage[center]{caption}
\newtheorem{thm}{Theorem}[section]
\newtheorem{lem}{Lemma}[section]

\newtheorem{rem}{Remark}[section]

\numberwithin{equation}{section}

\def\tr{\Delta}
\def\bea{\begin{eqnarray*}}
\def\eea{\end{eqnarray*}}
\def\be{\begin{eqnarray}}
\def\ee{\end{eqnarray}}

\def\tr{\Delta}

\def\de{\delta}

\def\({\left(}
\def\){\right)}

\def\e{\epsilon}

\def\na{\nabla}

\def\dis{\displaystyle}

\newcommand{\ric}{{\rm Ric}}
\begin{document}

\title{Rigidity of positively curved vacuum Static spaces}
\author[G. Yun]{Gabjin Yun}
\author[S. Hwang]{Seungsu Hwang} 
\keywords{CPE metric, positive sectional curvature, vacuum static space, Weyl curvature tensor
}
\subjclass[2010]{53C25, 58E11}
\address{Gabjin Yun\\Department of Mathematics, Myong Ji University\\ 
116 Myongji-ro Cheoin-gu, Yongjin, Gyeonggi 17058, Korea}
\email{gabjin@mju.ac.kr}
\address{Seungsu Hwang\\
Department of Mathematics, Chung-Ang University\\ 
84 Heukseok-ro, Dongjak-gu, Seoul 06974, Korea }
\email{seungsu@cau.ac.kr}
\maketitle

\begin{abstract}
In this paper, we study  compact vacuum static spaces of dimension $\ge 4$ with nonnegative sectional curvature. 
First, we prove that an $n$-dimensional  compact vacuum static space   is isometric to a standard sphere provided it has
positive sectional curvature  and the complete divergence of its Weyl curvature tensor is nonnegative.
Under the slightly weaker condition  of nonnegative sectional curvature,  we  show that such a vacuum static space must have parallel Ricci curvature tensor.  Second, we examine the critical point equation  arising from the critical metrics of the total scalar curvature functional on the space of Riemannian metrics restricted to unit volume and constant scalar curvature. 
We demonstrate  that a compact Riemannian manifold with nonnegative  sectional curvature and 
nonnegative complete divergence of the Weyl curvature tensor, which admits a nontrivial solution to the critical point equation,
 must be Einstein and  is isometric to a standard sphere. Our results can be viewed as extensions of the results in \cite{he}.
 \end{abstract}

\maketitle

\section{Introduction}
 A Riemannian manifold $(M,g)$ of dimension $n\geq 3$ is called a vacuum static space if there is a nontrivial solution function $f\in \ker s_g'^*$. 
 Here,  $s_g'^*$ is the formal $L^2$-adjoint of the linearization of the scalar curvature operator $s_g'$ at $g$ 
 which is defined as 
\be
 s_g'^*(f)=Ddf  -(\tr f)g - f\, {\rm Ric},\label{eqn2026-4-23-1}
 \ee
where  $Ddf $ and $\tr f$ are the Hessian and the (negative) Laplacian of $f$, respectively, and ${\rm Ric}$ is the Ricci curvature tensor of $M$.  For more details, one can refer \cite{bes}.

Kobayashi  \cite{kob} and Lafontaine \cite{laf}  independently provided a classification of compact locally conformally flat vacuum static spaces. Obata  \cite{oba} proved that a compact static Einstein space is isometric to a standard sphere. Qing and Yuan \cite{qy} classified complete Bach-flat vacuum static spaces with compact level sets of $f$. Recently, Kim  \cite{kim} classified complete vacuum static spaces with harmonic curvature (cf. \cite{li}). Recall that a manifold has harmonic curvature if the divergence of the Riemannian curvature tensor $R$ vanishes, i.e.,  $\mbox{div}R=0$.
Furthermore, H. He \cite{he} proved that  a $3$-dimensional compact vacuum static space with nonnegative Ricci curvature is locally conformally flat. By the results of Kobayashi and Lafontaine  \cite{kob, laf}, such  spaces can be completely  classified. 

In this paper, we investgate  compact vacuum static spaces of dimension $\ge 4$ with nonnegative sectional curvature. 
Using  a technical estimate on certain curvature tensors due to Catino \cite{cat} under the assumption of nonnegative sectional curvature, and through an analysis of various  tensors  in the vacuum static equation, we establish rigidity results for 
 spaces where the  complete divergence of the Weyl curvature tensor ${\mathcal W}$ is nonnegative.

\begin{thm} \label{thm2026-4-24-11}
Let $(M^n, g, f) $ be an $n$-dimensional $(n\ge 4)$  compact vacuum static space with positive sectional curvature. If ${\rm div}^4\, {\mathcal W}\geq 0$, then $(M,g)$ is isometric to a standard sphere.
\end{thm}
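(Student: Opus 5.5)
Our approach is to show that the traceless Ricci tensor $z={\rm Ric}-\frac{s}{n}g$ vanishes. Once $g$ is Einstein, Obata's theorem \cite{oba} for static Einstein spaces gives the sphere.

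\textbf{Static structure.} Taking the trace of $s_g'^*(f)=0$ gives $(n-1)\tr f=-sf$ with $s$ constant, since a vacuum static space has constant scalar curvature. The equation then becomes
\[
Ddf=f\,z-\frac{s}{n(n-1)}fg .
\]
By positive sectional curvature, $s>0$. The zero set of $f$ is a regular hypersurface, and $f$ changes sign.

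\textbf{Cotton tensor identity.} Differentiate the static equation and commute covariant derivatives using the Ricci identity. This yields the standard identity
\[
f\,C_{ijk}=T_{ijk}+W_{ijkl}\nabla_l f ,
\]
where $C$ is the Cotton tensor. Here $T$ is the tensor built from $z$ and $df$, with
\[
T_{ijk}=\frac{n}{n-2}\big(z_{ij}f_k-z_{ik}f_j\big)+(\text{trace terms}).
\]
Also recall ${\rm div}\,\mathcal W=-\frac{n-3}{n-2}C$. Hence ${\rm div}^4\mathcal W$ equals a multiple of ${\rm div}^2 C$, and through the contracted Bianchi identity it is related to the quantity $\langle C,\nabla z\rangle$ modulo divergence terms.

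\textbf{Integral identity.} We will derive an integral identity of Bochner type. Contract $Ddf=fz-\dots$ with $\tr z$ and integrate, or equivalently apply the divergence theorem to $f|z|^2$-type quantities. The aim is an identity of the form
\[
\int_M f^2\,{\rm div}^4\mathcal W\;=\;-c_n\int_M f^2|C|^2+\int_M \Big(\text{terms in } f^2 W(z,z),\ f^2{\rm tr}(z^3),\ f^2 s|z|^2\Big),
\]
together with the companion identity
\[
\int_M f^2|\nabla z|^2=\int_M f^2\big(2W(z,z)-\tfrac{2n}{n-2}{\rm tr}z^3-\tfrac{s}{n-1}\cdot\tfrac{n}{1}|z|^2\big)\cdot(\ldots)+\ldots .
\]
This second identity comes from $\tr z$ computed by the Weitzenböck formula, using that $\delta z=0$ and that $C$ is expressed by $\nabla z$. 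Weighting by $f^2$ lets the boundary and divergence terms be absorbed through the static equation.

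\textbf{Catino's estimate.} Under nonnegative sectional curvature, Catino's lemma \cite{cat} gives
\[
-W(z,z)+\frac{n}{n-2}{\rm tr}(z^3)\;\ge\; -\frac{s}{n-1}|z|^2\cdot\Big(\text{something}\Big).
\]
The precise form is $\frac{n-2}{n}\big(W(z,z)-\frac{2}{n-2}{\rm tr}z^3\big)\le \big(\frac{s}{n-1}-\ldots\big)|z|^2$ type. With strict positivity of sectional curvature the inequality becomes strict wherever $z\ne0$. Insert this into the integral identity. The hypothesis ${\rm div}^4\mathcal W\ge0$ and the sign of the $|C|^2$ term then force
\[
\int_M f^2\big(\text{positive}\big)|z|^2\le 0 ,
\]
so $z=0$ on $\{f\neq0\}$, which is dense. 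Hence $z\equiv0$ and Obata finishes the proof.

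\textbf{Main obstacle.} The hardest step is getting the integral identity with exactly matching constants. The $f^2$-weighted contribution of ${\rm div}^4\mathcal W$ must enter with a sign compatible with the hypothesis, and after integration by parts the curvature terms must assemble into precisely the combination that Catino's estimate controls. We would first try integrating ${\rm div}^4\mathcal W$ against $f^2$ by parts four times, giving $\int W_{ijkl}\nabla_i\nabla_l(f^2)\ldots$. Then we would substitute $\nabla_i\nabla_l f$ from the static equation and $\nabla f\otimes\nabla f$ terms via the identity $fC=T+W(\cdot,\nabla f)$, and check the constants against Catino's lemma.
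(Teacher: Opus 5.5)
Your outline has the right overall shape: an $f^2$-weighted Bochner-type identity, then Catino's bound (cleanly, $E=\langle{\rm Ric}\circ{\rm Ric},{\rm Ric}\rangle-\langle\mathring R({\rm Ric}),{\rm Ric}\rangle\ge nK_0|\mathring{\rm Ric}|^2$), then Obata. But there is a genuine gap: the integral identity that carries the whole proof is never produced. Both of your displayed identities contain unspecified terms and constants, and the favorable signs are assumed rather than derived.

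The concrete missing idea is how to handle the term created by weighting the Weitzenb\"ock formula $\frac12\Delta|{\rm Ric}|^2=E+\langle{\rm div}\,C,{\rm Ric}\rangle+|D{\rm Ric}|^2$ by $f^2$. Integration by parts turns the left side into $-2\int_M f\langle D_{\nabla f}{\rm Ric},{\rm Ric}\rangle$, which has no sign. Saying it is ``absorbed through the static equation'' is not yet a mechanism. The paper applies $d^D$ to the static equation, getting $df\wedge{\rm Ric}+fC=\tilde i_{\nabla f}R+\frac{s}{n-1}df\wedge g$. It then takes the divergence and pairs with ${\rm Ric}$, which gives the pointwise identity
\[
2\langle i_{\nabla f}C,{\rm Ric}\rangle+f\langle{\rm div}\,C,{\rm Ric}\rangle=fE-\langle D_{\nabla f}{\rm Ric},{\rm Ric}\rangle .
\]
Multiplying by $f$, integrating, and comparing with the weighted Weitzenb\"ock formula eliminates the unsigned term. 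What remains is a relation among $\int_M f^2E$, $\int_M f^2|D{\rm Ric}|^2$ and the cross terms $\int_M f\langle i_{\nabla f}C,{\rm Ric}\rangle$ and $\int_M f^2\langle{\rm div}\,C,{\rm Ric}\rangle$. The second cross term is removed by writing $f^2\langle{\rm div}\,C,{\rm Ric}\rangle$ as a divergence minus $\big(2f\langle i_{\nabla f}C,{\rm Ric}\rangle+\frac12 f^2|C|^2\big)$, using $C_{ijk}R_{jk;i}=\frac12|C|^2$. This yields $\int_M f\langle i_{\nabla f}C,{\rm Ric}\rangle=\int_M f^2\big(\tfrac32E+\tfrac12|D{\rm Ric}|^2+\tfrac14|C|^2\big)$. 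Here $|D{\rm Ric}|^2$ simply stays on the nonnegative side, so no companion identity for $|\nabla z|^2$ is needed. Eliminating $|\nabla z|^2$ by a weighted Weitzenb\"ock formula, as you propose, would reintroduce exactly the unsigned term.

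On the Weyl side, ${\rm div}^4\mathcal W$ is a multiple of ${\rm div}^3C$, not of ${\rm div}^2C$ (which is a $1$-form). Integrating ${\rm div}^3C$ against $f^2$ is the easy part; a few integrations by parts together with $f{\rm Ric}=Ddf+\frac{sf}{n-1}g$ and the tracelessness of $C$ give
\[
\int_M f^2\,{\rm div}^3C\,dv_g=-\int_M\big(2f\langle i_{\nabla f}C,{\rm Ric}\rangle+f^2|C|^2\big)\,dv_g .
\]
Pushing four derivatives onto $f^2$, as you suggest, leads to nothing more than this. The statement is useless on its own, because the cross term is unsigned. Only after inserting the evaluation of $\int_M f\langle i_{\nabla f}C,{\rm Ric}\rangle$ from the previous paragraph does one reach
\[
\int_M f^2\big(\tfrac32E+\tfrac12|D{\rm Ric}|^2+\tfrac34|C|^2\big)\,dv_g=-\tfrac12\int_M f^2\,{\rm div}^3C\,dv_g .
\]
From this, ${\rm div}^3C\ge0$ and $E\ge nK_0|\mathring{\rm Ric}|^2$ force $\mathring{\rm Ric}=0$ on the dense set $\{f\neq0\}$. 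Your proposal neither identifies this cross term nor gives a way to evaluate it, so as written the argument does not close.
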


It is noteworthy  that the condition of positive section curvature $(K_M>0)$ cannot be relaxed to $K_M\geq 0$. Indeed,  there 
exist  compact vacuum static spaces with nonnegative sectional curvature and  harmonic curvature that are not  standard spheres. For instance,  Kim and Shin showed that  the product space ${\mathbb S}^2 (\frac s6)\times {\mathbb S}^2 (\frac s3)$ with  $K_M \geq 0$ is a compact vacuum static space with  harmonic curvature \cite{ks}. For $K_M\geq 0$, we prove the following.

\begin{thm} \label{thm2026-4-24-12}
 Let $(M^n, g, f)$ be an $n$-dimensional $(n\ge 4)$  compact vacuum static space with nonnegative sectional curvature. 
 If ${\rm div}^4\, {\mathcal W}\geq 0$, then $(M,g)$ has parallel Ricci curvature tensor (i.e., $D{\rm Ric}= 0$), where $D$ denotes the Levi-Civita connection.
\end{thm}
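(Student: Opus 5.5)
The plan is an integral identity on $M$ that expresses $\int_M f\,|D{\rm Ric}|^2$ (or a closely related quantity) in terms of ${\rm div}^4\mathcal W$ and a curvature term whose sign Catino's estimate controls under $K_M\ge 0$.

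\emph{Basic identities.} Tracing the vacuum static equation $s_g'^*(f)=0$ gives $\Delta f=-\frac{s}{n-1}f$, and the contracted Bianchi identity then shows that $s$ is constant. So $Ddf=f\,\mathring{\rm Ric}+\frac{s}{n(n-1)}fg$, where $\mathring{\rm Ric}$ is the traceless Ricci tensor. Taking the divergence and commuting derivatives expresses the Cotton tensor $C$ through $f$, $df$ and $\mathring{\rm Ric}$. In the standard way,
\[
f\,C = \iota_{\nabla f}\mathcal W + T,
\]
where $T$ is an explicit tensor built from $df\wedge\mathring{\rm Ric}$ and its traces. Moreover ${\rm div}\,\mathcal W=\frac{n-3}{n-2}C$, so ${\rm div}^4\mathcal W=\frac{n-3}{n-2}\,{\rm div}^3 C$.

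\emph{Integral identity.} Write the quadratic quantity $\mathcal W(\nabla f,\cdot,\cdot,\nabla f)$ contracted with $\mathring{\rm Ric}$. Repeatedly integrate $f^2\,{\rm div}^4\mathcal W$ by parts, using the Hessian formula for $Ddf$ each time two derivatives land on $f$. The target identity has the shape
\[
\int_M f^2\,{\rm div}^4\mathcal W \;=\; -c_1\int_M f^2|C|^2-c_2\int_M f^2\langle \mathcal W(\mathring{\rm Ric}),\mathring{\rm Ric}\rangle + c_3\int_M f^2\,{\rm tr}(\mathring{\rm Ric}^3)+\dots
\]
Here $c_i>0$, and the remaining terms are curvature terms quadratic or cubic in $\mathring{\rm Ric}$. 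I expect these algebraic terms to combine into
\[
\mathcal Q(\mathring{\rm Ric}) = \sum_{i,j} R_{ijji}(\lambda_i-\lambda_j)^2,
\]
written in an eigenframe of $\mathring{\rm Ric}$, with eigenvalues $\lambda_i$. Equivalently, one could use the Weitzenböck/Bochner formula $\frac12\Delta|\mathring{\rm Ric}|^2=|D\mathring{\rm Ric}|^2+\dots+\mathcal Q$, valid when ${\rm div}\,\mathring{\rm Ric}=0$, which holds here since $s$ is constant. Multiplying that formula by $f$ or $f^2$ and integrating connects $\int f|D{\rm Ric}|^2$ to the same pieces.

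This quantity is where Catino's estimate under $K_M\ge 0$ enters: $\mathcal Q(\mathring{\rm Ric})\ge 0$, pointwise. Combining the Bochner identity with the divergence identity should then give
\[
0\le \int_M f^2\,{\rm div}^4\mathcal W = -\int_M\Big(a\,f^2|C|^2 + b\,|D\mathring{\rm Ric}|^2\,(\text{weight}) + \mathcal Q\,(\text{weight})\Big)\le 0 .
\]
Hence $D\mathring{\rm Ric}=0$ wherever the weight is nonzero. The weight will be $f^2$ (or $|\nabla f|^2$-type terms), which vanishes only on the zero set of $f$. That set has measure zero since $f$ solves an elliptic equation, so $D{\rm Ric}=0$ everywhere by continuity.

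\emph{Main obstacle.} The hard step is the bookkeeping of the divergence identity, getting exactly the combination $\mathcal Q$ with the right sign. In particular, the cross terms like $\int f\,\mathcal W(\nabla f,\mathring{\rm Ric}\nabla f)$ and $\int \mathring{\rm Ric}(\nabla f,\nabla f)\,\cdot$ must cancel or be absorbed. My first choice of weight is $f^2$, because then $\nabla(f^2)=2f\nabla f$ combines with $fC=\iota_{\nabla f}\mathcal W+T$ to give perfect squares $|fC|^2$. If leftover terms with $|\nabla f|^2$ appear, I would control them using $|\nabla f|^2+\frac{s}{n(n-1)}f^2$ and its gradient $2f\,\mathring{\rm Ric}(\nabla f)$.
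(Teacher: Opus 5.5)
Your architecture matches the paper's: weight $f^2$, the Bochner formula $\frac12\Delta|{\rm Ric}|^2=E+\langle{\rm div}\,C,{\rm Ric}\rangle+|D{\rm Ric}|^2$, Catino's estimate $E\ge 0$ (your $\mathcal Q$ equals $2E$), the sign hypothesis on ${\rm div}^3C$, and the final step that $\{f=0\}$ has measure zero (which the paper leaves implicit). The gap is that the one load-bearing step, the integral identity, is only conjectured (``I expect'', ``should then give''). You flag it yourself as the hard part, so as written there is no proof.

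Your displayed target is in fact inconsistent with what you need. In an eigenframe, $E=\sum_{i<j}K_{ij}(\lambda_i-\lambda_j)^2$. Expanding $K_{ij}$ via the Weyl decomposition shows that ${\rm tr}(\mathring{\rm Ric}^3)$ enters $E$ with coefficient $+\frac{n}{n-2}$. So for your algebraic terms to assemble into $-\mathcal Q$, the cubic term needs a negative coefficient, not $+c_3$. Also note $Ddf=f\mathring{\rm Ric}-\frac{s}{n(n-1)}fg$, with a minus sign.

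The missing idea is a closed system of integral relations in which $E$ appears directly, through the full curvature tensor and with no Weyl decomposition, and in which the cross term is eliminated rather than cancelled. Applying $d^D$ to $f{\rm Ric}=Ddf+\frac{sf}{n-1}g$ gives $df\wedge{\rm Ric}+fC=\tilde i_{\nabla f}R+\frac{s}{n-1}df\wedge g$. Taking its divergence and pairing with ${\rm Ric}$ gives, pointwise,
\[
2\langle i_{\nabla f}C,{\rm Ric}\rangle+f\langle{\rm div}\,C,{\rm Ric}\rangle=fE-\langle D_{\nabla f}{\rm Ric},{\rm Ric}\rangle .
\]
Set $A=\int f\langle i_{\nabla f}C,{\rm Ric}\rangle$, $B=\int f^2\langle{\rm div}\,C,{\rm Ric}\rangle$, $X=\int f^2E$, $Y=\int f^2|D{\rm Ric}|^2$ and $Z=\int f^2|C|^2$. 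Three relations follow.
\begin{itemize}
\item Multiplying the pointwise identity by $f$, integrating, and inserting the Bochner formula gives $4A+B=3X+Y$.
\item Integrating $\frac12f^2|C|^2=f^2C_{ijk}R_{jk;i}$ by parts gives $2A+B=-\frac12Z$.
\item Substituting $f{\rm Ric}=Ddf+\frac{sf}{n-1}g$ into $A$ and $B$ (both $C$ and ${\rm div}\,C$ are trace-free) and integrating by parts gives $A+B=\frac12\int f^2\,{\rm div}^3C$.
\end{itemize}
Eliminating $A$ and $B$ yields
\[
\int_M\Big(\tfrac32f^2E+\tfrac12f^2|D{\rm Ric}|^2+\tfrac34f^2|C|^2\Big)\,dv_g=-\tfrac12\int_Mf^2\,{\rm div}^3C\,dv_g ,
\]
after which your sandwich and nodal-set argument finishes the proof. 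The cross terms you worried about are exactly $A$, and they need not cancel. Without some such closed system, nothing in your outline forces $f^2E$ and $f^2|D{\rm Ric}|^2$ to appear with the same sign.
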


Next, we consider compact Riemannian manifolds admitting a nontrivial solution to the critical point equation. 
Let $M$ be a compact  $n$-dimensional smooth manifold, and let ${\mathcal M}_1$ be the space of all smooth Riemannian metrics of a unit volume on $M$. The total scalar curvature functional ${\mathcal S}$ on ${\mathcal M}_1$ is defined by
$$
{\mathcal S}(g) = \int_M s_g\, dv_g.
$$
where $s_g$ is the scalar curvature of the metric $g \in {\mathcal M}_1$ and $dv_g$ denotes the volume form of $g$.
Critical points of ${\mathcal S}$ on ${\mathcal M}_1$ are known to be Einstein (cf. \cite{bes}).
Introducing a subset ${\mathcal C}$ of ${\mathcal M}_1$ consisting of metrics with a constant scalar curvature,
the Euler-Lagrange equation of ${\mathcal S}$ restricted to ${\mathcal C}$ becomes
$$
\mathring {{\rm Ric}}_g=s_g'^*(f),
$$
called the critical point equation (CPE for short). Here, $\mathring{{\rm Ric}}_g$ is the traceless Ricci tensor defined by 
$\mathring{{\rm Ric}}_g = {\rm Ric}_g - \frac{s_g}{n}g$. In view of (\ref{eqn2026-4-23-1}), the CPE becomes
\be
\mathring{{\rm Ric}}_g = D_gdf -(\tr_g f)g -f {\rm Ric}_g.  \label{cpe}
\ee
From the variational problem for curvature functional, the Besse conjecture \cite{bes} describes that a non-trivial solution of the CPE should be Einstein. By taking the trace of (\ref{cpe}), as in vacuum static equation case, we have
 \be 
 \tr _g f=-\frac{s_g}{n-1}f \label{cpet}
 \ee 
Using the Ricci curvature tensor ${\rm Ric}_g = \mathring{{\rm Ric}}_g +  \frac{S_g}{n}g$, (\ref{cpe}) can be rewritten as 
$$
 (1+f){\rm Ric}_g = Ddf + \frac{s_g(nf+n-1)}{n(n-1)}g.$$
Since $g$ is clearly Einstein when $f=0$, going forward, we assume that $f$ is not trivial. 
Signicant  progress has been made toward Besse conjecture. This conjecture holds if  the Riemannian manifold is locally conformally flat \cite{laf}. 
It is also known that  Besse conjecture holds if underlying manifold has  harmonic curvature \cite{y-c-h}, \cite{erra},  
or the metric is Bach-flat \cite{qy}. Recall that $(M, g)$ is said to be Bach-flat  if the Bach tensor vanishes. 
Baltazar  \cite{bal} further showed that the  conjecture holds if ${\rm div}^4 {\mathcal W} = 0$ and the radially Weyl curvature vanishes. Specifically, Baltazar  \cite{bal} proved that a $3$-dimensional CPE metric with nonnegative sectional curvature is isometric to a standard sphere. It has been generalized to nonnegative Ricci curvature by He \cite{he} in dimension $3$. It is well-known that the sectional curvature is determined by the Ricci curvature in dimension three. 
 It is natural to ask the rigidity of vacuum static spaces under nonnegative sectional  curvature condition.
In this paper, we consider a compact Riemannian $n$-manifold  $(M^n, g), n \ge 4,$ admitting a nontrivial solution $f$ to the CPE. In case of vanishing  complete divergence of Weyl curvature tensor, we extend these results to higher dimensions $(n \ge 4)$.

\begin{thm}  \label{thm2026-4-24-13}
Let $(g,f)$ be a nontrivial solution to the CPE on a  compact  $n$-dimensional  $(n\ge 4)$  smooth manifold $M^n$ with $K_M \geq 0$. If $\mbox{div}^4\, {\mathcal W}\geq 0$,  then $(M,g)$ Einstein and, consequently,  is isometric to a standard sphere.
\end{thm}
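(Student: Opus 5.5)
The plan is to reduce the CPE to a vacuum-static-type identity for $\mathring{\rm Ric}$ that involves the Weyl tensor and the $(1+f)$-weighted divergence. From there, integration by parts and the nonnegative-curvature estimate of Catino should force $\mathring{\rm Ric}=0$.

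\textbf{Basic identities.} Write the CPE as $(1+f)\mathring{\rm Ric} = Ddf + \frac{s f}{n(n-1)}g$, where $s$ is constant. Taking the divergence and using $\delta \mathring{\rm Ric}=0$ gives the standard identity
\[
(1+f)\, C(X,Y,Z)\;=\; \tilde T(X,Y,Z) - \mathcal W(X,Y,Z,\nabla f),
\]
up to normalization constants. Here $C$ is the Cotton tensor, and $\tilde T$ is the usual $3$-tensor built from $df\wedge \mathring{\rm Ric}$ and $i_{\nabla f}\mathring{\rm Ric}\wedge g$. Contracting with $\mathring{\rm Ric}$, I will derive a Bochner formula for $(1+f)|\mathring{\rm Ric}|^2$. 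Its ingredients are $|D\mathring{\rm Ric}|^2$, the zeroth-order curvature term $\langle \mathring W(\mathring{\rm Ric}),\mathring{\rm Ric}\rangle$ together with ${\rm tr}(\mathring{\rm Ric}^3)$ and $s|\mathring{\rm Ric}|^2$, and first-order terms in $\nabla f$. Each contraction of $\mathcal W$ with $\nabla f$ can be traded, by integrating by parts twice, for an integral of $f$ (or $1+f$) against ${\rm div}^4\mathcal W$, because $Ddf$ is expressed through $\mathring{\rm Ric}$ by the CPE. I expect an integral identity of the form
\[
\int_M (1+f)\Big(|D\mathring{\rm Ric}|^2 + \tfrac{2}{n-2}\,\mathcal Q(\mathring{\rm Ric})\Big) + c\int_M |C|^2\text{-type terms} \;=\; -\int_M \varphi(f)\,{\rm div}^4\mathcal W ,
\]
with $\mathcal Q(\mathring{\rm Ric}) = s|\mathring{\rm Ric}|^2/(n-1)\cdot(\dots) - \langle W(\mathring{\rm Ric}),\mathring{\rm Ric}\rangle - \dots$ the Catino curvature quadratic form.

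\textbf{Catino's estimate.} Under $K_M\ge 0$, Catino's estimate gives the pointwise inequality $\mathcal Q(\mathring{\rm Ric})\ge 0$. Combined with ${\rm div}^4\mathcal W\ge0$ and the right weight, this forces every term to vanish, and in particular $D\mathring{\rm Ric}=0$ wherever the weight is positive.

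\textbf{Main obstacle: the weight $1+f$ changes sign.} The set $\{f=-1\}$ can be nonempty; this does not happen in the vacuum static case, where the weight is $f$ and one simply splits along $\{f=0\}$. I would handle it in two steps.

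\emph{Step one.} First rule out the degenerate possibilities using known facts for CPE metrics. By Lemma-type results (e.g.\ Hwang, Yun), $\{f=-1\}$ has measure zero, $1+f$ is not of one sign unless $g$ is Einstein, and $f>-1$ on the relevant region. Alternatively, I would integrate on $\{f>-1\}$ with boundary terms that vanish because $\mathring{\rm Ric}(\nabla f,\cdot)$ is normal to the level set there.

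\emph{Step two.} Apply this on each component. We get $D{\rm Ric}=0$ on an open dense set, hence everywhere, so $g$ has harmonic curvature. The known result for harmonic curvature \cite{y-c-h}, \cite{erra} then yields that $g$ is Einstein.

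Once $g$ is Einstein, the CPE becomes $Ddf=-\frac{s}{n(n-1)}fg$ with $f$ nontrivial. Obata's theorem then gives that $(M,g)$ is isometric to a standard sphere.
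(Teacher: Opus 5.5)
There is a genuine gap, and it sits at the step you flag as the main obstacle. First, the integral identity that carries the whole argument is never derived. You only say you expect something of the form $\int_M(1+f)\bigl(|D\mathring{{\rm Ric}}|^2+\frac{2}{n-2}\mathcal Q\bigr)+\dots=-\int_M\varphi(f)\,{\rm div}^4\mathcal W$, with $c$, $\varphi$ and part of $\mathcal Q$ left open. So neither the sign structure nor the applicability of Catino's estimate is established. Catino's lemma controls the specific quantity $E=\langle{\rm Ric}\circ{\rm Ric},{\rm Ric}\rangle-\langle\mathring{R}({\rm Ric}),{\rm Ric}\rangle$; you would have to check that whatever remains after integrating Weyl terms by parts is a positive multiple of it.

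Second, and more seriously, an identity whose good terms carry the weight $1+f$ cannot be closed by a sign argument. Since ${\rm div}(i_{\nabla f}\mathring{{\rm Ric}})=(1+f)|\mathring{{\rm Ric}}|^2$, one has $\int_M(1+f)|\mathring{{\rm Ric}}|^2=0$, so on every non-Einstein CPE metric $1+f$ is necessarily negative somewhere. Your repair by integrating over $\{f>-1\}$ does not work either. The boundary terms from the divergence theorem are fluxes, and the normality of $i_{\nabla f}\mathring{{\rm Ric}}$ along $f^{-1}(-1)$ kills its tangential part, not its flux. For instance, with $\nu$ the outward normal of $\{f>-1\}$, $\int_{f^{-1}(-1)}\mathring{{\rm Ric}}(\nabla f,\nu)=\int_{\{f>-1\}}(1+f)|\mathring{{\rm Ric}}|^2$. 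This vanishes only if $\mathring{{\rm Ric}}\equiv0$ on $\{f>-1\}$, i.e.\ only in the situation you are trying to prove. On $\{f<-1\}$ the weight has the wrong sign relative to ${\rm div}^4\mathcal W\ge0$, so ``apply this on each component'' gives nothing there.

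The missing idea is to multiply by $1+f$ once more before integrating, so that all good terms carry the weight $(1+f)^2\ge0$.

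\begin{itemize}
\item Apply $d^D$ to the CPE to get $df\wedge{\rm Ric}+(1+f)C=\tilde i_{\nabla f}R+\frac{s}{n-1}df\wedge g$. Take the divergence and pair with ${\rm Ric}$; this gives pointwise $2\langle i_{\nabla f}C,{\rm Ric}\rangle+(1+f)\langle{\rm div}\,C,{\rm Ric}\rangle=(1+f)E-\langle D_{\nabla f}{\rm Ric},{\rm Ric}\rangle-\frac{s}{n-1}|{\rm Ric}|^2+\frac{s^3}{n(n-1)}$.
\item Multiply by $1+f$ and integrate. The last two terms drop out by $\int_M(1+f)|\mathring{{\rm Ric}}|^2=0$; this is where that identity is actually useful, used globally.
\item Combine the Bochner formula $\frac12\Delta|{\rm Ric}|^2=E+\langle{\rm div}\,C,{\rm Ric}\rangle+|D{\rm Ric}|^2$, the algebraic identity $\frac12|C|^2=C_{ijk}R_{jk;i}$, and the integrations by parts $\int_M(1+f)\langle i_{\nabla f}C,{\rm Ric}\rangle=\int_M{\rm div}\,C(\nabla f,\nabla f)$ and $\int_M\bigl((1+f)^2\langle{\rm div}\,C,{\rm Ric}\rangle+{\rm div}\,C(\nabla f,\nabla f)\bigr)=\frac12\int_M(1+f)^2{\rm div}^3C$.
\end{itemize}

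These give
\[
\int_M(1+f)^2\Bigl(\tfrac{3}{2}E+\tfrac{1}{2}|D{\rm Ric}|^2+\tfrac{3}{4}|C|^2\Bigr)=-\tfrac{1}{2}\int_M(1+f)^2\,{\rm div}^3C,
\]
where ${\rm div}^3C$ is a positive multiple of ${\rm div}^4\mathcal W$ for $n\ge4$ and $E\ge0$ by Catino. Every term now has a sign, and no splitting along $f^{-1}(-1)$ is needed: that set has empty interior, since $Ddf=\frac{s}{n(n-1)}g$ on it.

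You get $D{\rm Ric}=0$, hence $C=0$. The harmonic-curvature results then give Einstein; alternatively, directly, $0=\int_M(1+f)|\mathring{{\rm Ric}}|^2=|\mathring{{\rm Ric}}|^2\,{\rm Vol}(M)$ using $\int_M f=0$. After that your Obata step is fine.
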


Unlike  vacuum static spaces,   it is known that product manifolds do not admit nontrivial solutions to the CPE, which allows for the stronger rigidity conclusion in Theorem~\ref{thm2026-4-24-13}.

Rigidity results under positive Ricci curvature conditions remain unknown, and establishing such rigidities would be of significant interest.

\vspace{.12in}
\noindent
{\bf Notations and Convention}
We occasionally use the notation $\langle \,\, ,\,\, \rangle$ for metric $g$ or inner product induced by $g$ on tensor spaces. For curvature tensors, we basically follow the convention in \cite{bes} so that the the Riemannian curvature tensor $R$ is defined by
$R(X, Y)Z =D_YD_XZ  - D_XD_YZ -D_{[Y, X]}Z$. 

\section{Preliminaries}

As a preliminary for subsequent use,  we first recall the definition and key properties of the Cotton tensor.
Let $(M^n, g)$ be an $n$-dimensional  Riemannian manifold equipped with the Levi-Civita  connection $D$. 
We begin by considering a differential operator acting on the space  of symmetric $2$-tensors. 
For any symmetric $2$-tensor $h$  on $M$, its  covariant differential $d^Dh$ is defined by
$$ 
d^Dh(X, Y, Z) = D_Xh(Y, Z) - D_Yh(X, Z) 
$$
 for any vectors $X, Y$ and $Z$. The {\it Cotton tensor}  $C \in\Gamma(\Lambda^2 M \otimes T^*M)$ is then defined as 
 \be 
 C = d^D \left({\rm Ric} - \frac{s}{2(n-1)} g\right)   = d^D{\rm Ric} - \frac{1}{2(n-1)} ds \wedge g. \label{eqn2017-4-1-1}
 \ee 
It is known that for $n=3$, $C=0$ if and only if $(M^3, g)$ is locally conformally flat. For  $n \ge 4$, the vanishing of the Weyl
curvature tensor $\mathcal W$ implies  $C =0$. Conversely $C=0$ is equivalent to the Weyl curvature tensor being harmonic 
(i.e, $\de \mathcal W = 0$) due to the identity \cite{bes}: 
\be 
\de \mathcal W = - \frac{n-3}{n-2}d^D \left({\rm Ric} - \frac{s}{2(n-1)} g\right) = - \frac{n-3}{n-2} C\label{eqn2016-12-3-16} 
\ee 
under the following identification
$$ 
\Gamma(T^*M\otimes \Lambda^2M) \equiv \Gamma(\Lambda^2M \otimes T^*M). 
$$
We say that  the complete divergence of Weyl curvature tensor is nonnegative if ${\rm div}^4 \mathcal W \geq 0$, or equivalently ${\rm div}^3 C \geq 0$.
In a local frame $\{e_i\}$, let $C(e_i, e_j, e_k) = C_{ijk}$. It follows from (\ref{eqn2017-4-1-1}) that 
$$
C_{ijk} = R_{jk;i}-R_{ik;j} - \frac{1}{2(n-1)} s_i \de_{jk} + \frac{1}{2(n-1)} s_j \de_{ik}, 
$$ 
where ${\rm Ric} (e_i, e_j) = R_{ij}, \, R_{ij;k} = D_{e_k}{{\rm Ric}}(e_i, e_j)$ and $ds(e_i) = s_i$.
From this, we observe that the Cotton tensor $C$ satisfies the following properties: first, the  cyclic summation of $C_{ijk}$ vanishes;
 second, $C_{ijk}$ is skew symmetric in its first two indices, which implies that
$C(X, X, \cdot) = 0$ for any vector  field $X$; third, and the trace of $C$ in any two indices vanishes (i.e., $C_{iij} = C_{ijj} = C_{iji} = 0$).

In both the vacuum static equation and  CPE cases, the scalar curvature $s$ is constant. Consequently, the Cotton tensor simplifies to 
$$
C = d^D {{\rm Ric}} = d^D {\mathring {{\rm Ric}}}
$$
and we have the following algebraic identity for its components:
$$
C_{ijk}R_{jk;i}  =  |C|^2 + C_{ijk}R_{ik;j} = |C|^2 - C_{jik}R_{ik;j}.
$$ 
Here, we follow the Einstein summation convention for indices. This shows that 
\bea
\frac{1}{2}|C|^2=   C_{ijk}R_{jk;i} = C(e_i, e_j, e_k)D_{e_i}{{\rm Ric}}(e_j, e_k).\label{eqn2017-4-2-2} 
\eea
Furthermore, a direct calculation yields 
\be 
\frac{1}{2}|C|^2 
&=& 
C_{ijk}R_{jk;i} = (C_{ijk}R_{jk})_{;i} - C_{ijk;i}R_{jk} \nonumber\\ 
&=&
(C_{ijk}R_{jk})_{;i} + \langle \de C, {{\rm Ric}}\rangle,\label{eqn2018-1-23-1} 
\ee
where the divergence of the Cotton tensor is defined as $\de C (X, Y) = -{\rm div}\, C(X, Y)  = -D_{e_i} C(e_i, X, Y)$.

\vspace{.12in}
Recall that we denote by $D$ the Levi-Civita connection on a Riemannian manifold $(M, g)$. The composition $D^*D$ of 
the connection and its adjoint  acting on  symmetric $2$-tensors is called the rough Laplacian given  by $D^*D = D_{e_i}D_{e_i} - D_{D_{e_i}e_i}$ which is also denoted by $\Delta$.

\begin{lem}[\cite{bes}] \label{lem2026-4-23-10}
Assume a Riemannian $n$-manifold $(M^n, g)$ has constant scalar curvature. Then
$$
\delta d^D {{\rm Ric}} = -D^*D {{\rm Ric}} + {{\rm Ric}} \circ {{\rm Ric}} - \mathring R({{\rm Ric}}).
$$
Here ${{\rm Ric}}\circ  {{\rm Ric}}$  and $\mathring{R}({\rm Ric})$  are defined as 
$$
{{\rm Ric}}\circ  {{\rm Ric}}(X,Y)=\sum_{i=1}^n {{\rm Ric}}(X, e_i){{\rm Ric}}(Y,e_i)
$$
and
$$
\mathring{R}({\rm Ric})(X,Y)=\sum_{i=1}^n {{\rm Ric}}(R(X,e_i)Y, e_i),
$$
respectively,  for an orthonormal frame $\{e_i\}_{i=1}^n$ and any vectors $X,Y$.
\end{lem}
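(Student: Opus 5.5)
This is the classical Weitzenböck-type identity for $d^D{\rm Ric}$, so I would verify it by a direct local-frame computation. Work at a point with a normal orthonormal frame $\{e_i\}$. By the definition of $\delta$ given above,
\[
\delta d^D{\rm Ric}(X,Y) = -\sum_i (D_{e_i} d^D{\rm Ric})(e_i,X,Y).
\]
Writing $d^D{\rm Ric}(e_i,X,Y) = (D_{e_i}{\rm Ric})(X,Y) - (D_X{\rm Ric})(e_i,Y)$ and differentiating once more gives
\[
\delta d^D{\rm Ric}(X,Y) = -\sum_i (D^2_{e_i,e_i}{\rm Ric})(X,Y) + \sum_i (D^2_{e_i,X}{\rm Ric})(e_i,Y).
\]
The first sum is $-D^*D\,{\rm Ric}$, with the sign convention of the paper.

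The second sum is where the curvature enters. I would commute the covariant derivatives with the Ricci identity,
\[
D^2_{e_i,X}{\rm Ric} = D^2_{X,e_i}{\rm Ric} + \big(\text{curvature acting on }{\rm Ric}\big).
\]
This produces two terms: $-{\rm Ric}(R(e_i,X)e_i,Y)$ and $-{\rm Ric}(e_i,R(e_i,X)Y)$, with signs fixed by the paper's convention $R(X,Y)Z = D_YD_XZ - D_XD_YZ - D_{[Y,X]}Z$. After commuting, the leading term is $\sum_i (D^2_{X,e_i}{\rm Ric})(e_i,Y) = D_X(\delta^*\text{-type divergence})$, namely $-(D_X\delta{\rm Ric})(Y)$. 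The contracted second Bianchi identity gives $\delta{\rm Ric} = -\tfrac12 ds$, and constant scalar curvature makes this vanish.

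It then remains to identify the two curvature terms. Tracing $R(e_i,X)e_i$ gives $\pm{\rm Ric}(X)$, so the first term becomes ${\rm Ric}\circ{\rm Ric}(X,Y)$. The second term is $\sum_i {\rm Ric}(e_i,R(e_i,X)Y)$. Using the symmetries of $R$ and of ${\rm Ric}$, I rewrite it as $\sum_i {\rm Ric}(R(X,e_i)Y,e_i)$ up to sign, which is $-\mathring R({\rm Ric})(X,Y)$.

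The main obstacle, though it is routine, is getting every sign consistent with Besse's conventions: the sign of $R$, the sign of $\delta$, and the positive rough Laplacian. I would check the final signs on a round sphere, where ${\rm Ric} = (n-1)g$ and $\mathring R(g) = {\rm Ric}$. There both sides must vanish: ${\rm Ric}\circ{\rm Ric} - \mathring R({\rm Ric}) = (n-1)^2 g - (n-1)^2 g = 0$. Alternatively, since the lemma is cited from \cite{bes}, one may simply invoke Besse, 4.71 and the Weitzenböck formula for $d^D$ on symmetric 2-tensors, specialized to ${\rm Ric}$ with $\delta{\rm Ric} = 0$.
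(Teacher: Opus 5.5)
The paper gives no proof of this lemma; it is quoted from \cite{bes}, so your fallback of invoking Besse is exactly what the authors do. Your direct frame computation is the derivation behind that citation, and its skeleton is correct:
\begin{itemize}
\item the expansion $\delta d^D{\rm Ric}(X,Y) = -\sum_i (D^2_{e_i,e_i}{\rm Ric})(X,Y) + \sum_i (D^2_{e_i,X}{\rm Ric})(e_i,Y)$;
\item commuting the second sum;
\item the vanishing of $\sum_i (D^2_{X,e_i}{\rm Ric})(e_i,Y) = (D_X\,{\rm div}\,{\rm Ric})(Y)$ by the contracted Bianchi identity with $s$ constant;
\item identifying the two zeroth-order terms as ${\rm Ric}\circ{\rm Ric}$ and $-\mathring R({\rm Ric})$.
\end{itemize}
Its advantage over the bare citation is a check in the paper's own conventions, which differ from Besse's. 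That check only has value if you track the signs rather than hedge them with ``$\pm$'' and ``up to sign''.

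Here is how the signs come out. With $R(X,Y)Z = D_YD_XZ - D_XD_YZ - D_{[Y,X]}Z$, one has $D^2_{A,B} - D^2_{B,A} = R(B,A)$ as derivations. So the commutator contributes $\sum_i (R(X,e_i){\rm Ric})(e_i,Y) = -\sum_i {\rm Ric}(R(X,e_i)e_i,Y) - \sum_i {\rm Ric}(e_i,R(X,e_i)Y)$. In this convention ${\rm Ric}(X,W) = \sum_i g(R(X,e_i)W,e_i)$, hence $\sum_i R(X,e_i)e_i = -{\rm Ric}(X)$. The first term is therefore $({\rm Ric}\circ{\rm Ric})(X,Y)$, and the second is $-\mathring R({\rm Ric})(X,Y)$ directly from the definition.

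Your displayed terms $-{\rm Ric}(R(e_i,X)e_i,Y)$ and $-{\rm Ric}(e_i,R(e_i,X)Y)$ carry the signs for the opposite convention $R(X,Y) = [D_X,D_Y] - D_{[X,Y]}$; in the paper's convention both flip. The round-sphere test cannot detect this. It only fixes the relative sign of the two curvature terms, and no example with $D{\rm Ric} = 0$ can do better, since there the Ricci identity itself forces ${\rm Ric}\circ{\rm Ric} = \mathring R({\rm Ric})$.

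Finally, the first sum equals $-D^*D{\rm Ric}$ only because the paper defines $D^*D = D_{e_i}D_{e_i} - D_{D_{e_i}e_i}$, the trace of the Hessian; its use in Lemma~\ref{lem2026-4-24-2} confirms this. With Besse's nonnegative rough Laplacian that sum is $+D^*D{\rm Ric}$. Aligning with ``the positive rough Laplacian'', as you propose, would contradict the stated formula.
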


In Lemma~\ref{lem2026-4-23-10}, we would like to point out the operator $d = - {\rm div}$ denotes  the negative divergence operator, 
rather than the adjoint operator $\de^D$ of $d^D$. Indeed, it is straightforward  to verify  that $\de^D = 2\de$ when acting on 
symmetric $2$-tensors.

\begin{lem} \label{lem2026-4-24-2}
Let $(M^n, g)$ be a Riemannian $n$-manifold with constant scalar curvature. Then
\be
\frac12 \Delta |{\rm Ric}|^2  =  E +\langle {\rm div}\, C, {\rm Ric} \rangle + |D{\rm Ric}|^2,\label{eqn2026-4-22-21-1}
\ee
where
$$
E:= \langle {{\rm Ric}}\circ  {{\rm Ric}}, {{\rm Ric}}\rangle - \langle \mathring{R}({{\rm Ric}}), {{\rm Ric}}\rangle.
$$
\end{lem}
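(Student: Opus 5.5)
The identity is a Bochner-type formula for $|{\rm Ric}|^2$, and I would obtain it directly from Lemma~\ref{lem2026-4-23-10}. Here $\Delta$ denotes the rough Laplacian $D^*D = D_{e_i}D_{e_i} - D_{D_{e_i}e_i}$. Taking the sign in the statement as given, this is the analyst's Laplacian, so $\Delta |{\rm Ric}|^2 = D_{e_i}D_{e_i}(R_{jk}R_{jk})$ in a normal frame.

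The first step is pointwise differentiation at a point with a normal frame:
$$
\frac12 \Delta |{\rm Ric}|^2 = \langle \Delta {\rm Ric}, {\rm Ric}\rangle + |D{\rm Ric}|^2 .
$$
This is routine: differentiate $R_{jk}R_{jk}$ twice.

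The second step identifies $\Delta{\rm Ric}$ through Lemma~\ref{lem2026-4-23-10}. With the rough Laplacian understood as $D_{e_i}D_{e_i}$, the lemma reads $\delta d^D{\rm Ric} = -\Delta{\rm Ric} + {\rm Ric}\circ{\rm Ric} - \mathring R({\rm Ric})$. Hence
$$
\Delta {\rm Ric} = {\rm Ric}\circ{\rm Ric} - \mathring R({\rm Ric}) - \delta d^D{\rm Ric}.
$$
Since $s$ is constant, $C = d^D{\rm Ric}$, and by the definition $\delta C = -{\rm div}\, C$ we get $-\delta d^D {\rm Ric} = {\rm div}\, C$.

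The third step pairs this with ${\rm Ric}$. That gives $\langle \Delta{\rm Ric},{\rm Ric}\rangle = E + \langle {\rm div}\,C, {\rm Ric}\rangle$, and substituting into the first step yields (\ref{eqn2026-4-22-21-1}).

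The main obstacle is sign and convention bookkeeping, not any analysis. First, one must check that the $D^*D$ in Lemma~\ref{lem2026-4-23-10} matches the paper's stated formula $D_{e_i}D_{e_i}-D_{D_{e_i}e_i}$ rather than Besse's $-{\rm tr}\,D^2$. Second, the $\delta$ acting on $d^D{\rm Ric}$ must be the same divergence convention, contracting on the first index, that the paper uses for $\delta C$. Third, it must be consistent with the remark that $d=-{\rm div}$ there, not $\delta^D$.

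As a cross-check, I would verify the lemma's formula in coordinates. Using $R_{jk;i}-R_{ik;j}=C_{ijk}$, we have $C_{ijk;i} = R_{jk;ii} - R_{ik;ji}$. Commuting derivatives in the second term and using $R_{ik;i} = \frac12 s_k = 0$ produces ${\rm Ric}\circ{\rm Ric} - \mathring R({\rm Ric})$ with the paper's curvature sign. This confirms
$$
{\rm div}\,C = \Delta{\rm Ric} - {\rm Ric}\circ{\rm Ric} + \mathring R({\rm Ric}),
$$
which is exactly the needed identity. Pairing it with ${\rm Ric}$ finishes the proof.
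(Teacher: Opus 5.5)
Your proof is correct and follows the paper's argument. The paper likewise uses $d^D{\rm Ric}=C$ to rewrite Lemma~\ref{lem2026-4-23-10} as $D^*D{\rm Ric} = {\rm Ric}\circ{\rm Ric} - \mathring R({\rm Ric}) + {\rm div}\,C$, then pairs this with ${\rm Ric}$ in $\frac12\Delta|{\rm Ric}|^2 = \langle D^*D{\rm Ric},{\rm Ric}\rangle + |D{\rm Ric}|^2$. Your Ricci-identity cross-check usefully confirms that the paper's $D^*D$ must be read as $+{\rm tr}\,D^2$, not Besse's $-{\rm tr}\,D^2$, for the signs to match.
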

\begin{proof}
Since the scalar curvature is constant, we have $d^D {\rm Ric} = C$ and so it follows from Lemma~\ref{lem2026-4-23-10} that
$$
D^*D{\rm Ric}  = {\rm Ric}\circ {\rm Ric} - {\mathring R}({\rm Ric}) +{\rm div}\, C.
$$
Thus,
\bea
\frac12 \Delta |{\rm Ric}|^2  &=& \langle D^*D{\rm Ric}, {\rm Ric}\rangle + |D{\rm Ric}|^2\\
&=&
E +\langle {\rm div}\, C, {\rm Ric} \rangle + |D{\rm Ric}|^2.
\eea
\end{proof}

\vspace{.12in}
In \cite{cat}, Catino established an insightful pointwise estimate for a specific tensor to prove  rigidity results for four-dimensional 
shrinking Ricci solitons with positive curvature. This estimate remains valid for  Riemannian manifolds with  nonnegative sectional curvature.


\begin{lem} [\cite{cat}]  \label{lemma2} 
Let $(M^n,g)$ be Riemannian manifold of dimension  $n\geq 3$. If the sectional curvature $K_M$ satisfies $K_M\geq \epsilon s$ for some 
$\epsilon\in {\mathbb R}$, then the following estimate hold 
$$
E=\langle {{\rm Ric}}\circ  {{\rm Ric}}, {{\rm Ric}}\rangle - \langle \mathring{R}({{\rm Ric}}), {{\rm Ric}}\rangle \ge n \e s |\mathring{{\rm Ric}}|^2.
$$
\end{lem}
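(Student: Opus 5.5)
We need to prove Catino's lemma: if $K_M\ge \epsilon s$, then $E\ge n\epsilon s|\mathring{\rm Ric}|^2$. The plan is to diagonalize the Ricci tensor at a point and write $E$ as a weighted sum of sectional curvatures.

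Fix a point $p$ and choose an orthonormal frame $\{e_i\}$ diagonalizing ${\rm Ric}$, with eigenvalues $\lambda_1,\dots,\lambda_n$. Then $\langle {\rm Ric}\circ{\rm Ric},{\rm Ric}\rangle=\sum_i\lambda_i^3$. For the curvature term,
\[
\langle \mathring R({\rm Ric}),{\rm Ric}\rangle=\sum_{j,k}R_{jk}\sum_i {\rm Ric}(R(e_j,e_i)e_k,e_i)=\sum_{i,j}\lambda_i\lambda_j\, R(e_j,e_i,e_j,e_i).
\]
Up to the sign convention of the paper (Besse's), $R(e_j,e_i,e_j,e_i)=-K_{ij}$ for $i\neq j$, where $K_{ij}$ is the sectional curvature of the plane spanned by $e_i,e_j$. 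One must check the sign carefully so that, for the round sphere, $\mathring R(g)$ reproduces ${\rm Ric}$ up to the correct sign. With that convention one gets $\langle\mathring R({\rm Ric}),{\rm Ric}\rangle=\sum_{i\ne j}K_{ij}\lambda_i\lambda_j$.

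Next use $\lambda_i=\sum_{j\ne i}K_{ij}$, so that $\sum_i\lambda_i^3=\sum_{i\ne j}K_{ij}\lambda_i^2$. Symmetrizing in $i,j$ gives
\[
E=\sum_{i\ne j}K_{ij}\bigl(\lambda_i^2-\lambda_i\lambda_j\bigr)=\frac12\sum_{i\ne j}K_{ij}(\lambda_i-\lambda_j)^2 .
\]
Now invoke the hypothesis $K_{ij}\ge \epsilon s$. The weights $(\lambda_i-\lambda_j)^2$ are nonnegative, so
\[
E\ge \frac{\epsilon s}{2}\sum_{i, j}(\lambda_i-\lambda_j)^2=\frac{\epsilon s}{2}\Bigl(2n\sum_i\lambda_i^2-2s^2\Bigr)=n\epsilon s\Bigl(|{\rm Ric}|^2-\frac{s^2}{n}\Bigr)=n\epsilon s|\mathring{\rm Ric}|^2 .
\]
This holds regardless of the sign of $\epsilon s$, since only $K_{ij}-\epsilon s\ge0$ is used.

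The only delicate step is the sign bookkeeping in the curvature convention $R(X,Y)Z=D_YD_XZ-D_XD_YZ-D_{[Y,X]}Z$. I would verify it on the round sphere, where $E$ must vanish because ${\rm Ric}$ is a multiple of $g$. The rest is an elementary algebraic identity.
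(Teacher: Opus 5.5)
Your proof is correct and is essentially Catino's argument, which the paper cites without reproducing; its Remark at the end of Section 3 recalls exactly your identity $E=\sum_{i<j}K_{ij}(\lambda_i-\lambda_j)^2$ in a Ricci eigenframe. One small slip: in the paper's (Besse's) convention, $g(R(e_j,e_i)e_j,e_i)=+K_{ij}$ for $i\neq j$, not $-K_{ij}$; your check $\mathring R(g)={\rm Ric}$ forces the plus sign, and your final formula $\langle\mathring R({\rm Ric}),{\rm Ric}\rangle=\sum_{i\neq j}K_{ij}\lambda_i\lambda_j$ in fact uses it.
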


By Lemma~\ref{lemma2}, $E \ge 0$ if the sectional curvature of $(M, g)$ is nonnegative. Furthermore, if $(M, g)$ is a compact Riemannian manifold with positive sectional curvature, then
\be
E \ge nK_0 |\mathring{{\rm Ric}}|^2, \label{eqn2026-4-24-17}
\ee
where $K_0 = \min_M K_M>0$ denotes the minimum sectional curvature. In particular, this inequality implies that if $E=0$ on  a compact Riemannian manifold $(M^n, g)$ of dimension $\ge 3$ with  positive sectional curvature, then $(M, g)$ must be an
Einstein manifold.


\section{Vacuum Static spaces}

In this section we consider compact vacuum static spaces of nonnegative sectional curvature.
Recall that the vacuum static equation  can be rewritten as 
\be 
f {{\rm Ric}} =Ddf +\frac {sf}{n-1}g \label{equation1}
\ee
since the potential function $f$ satisfies 
$$\tr f =-\frac {s}{n-1}f.$$
It is well known that the scalar curvature of the vacuum static space should be constant. 

We begin by applying the covariant differential $d^D$ to both sides of (\ref{equation1}). Since the scalar curvature $s$ is constant, we have $d^D{{\rm Ric}} = C$, where $C$ denotes the Cotton tensor. Furthermore, it is  well known that $d^DDdf = {\tilde i}_{\na f}R$, where
the interior product ${\tilde i}_{\na f}R$ is defined by
$$
{\tilde i}_{\na f}R(X, Y, Z) = R(X, Y, Z, \na f)
$$ 
for any vectors $X, Y$ and $Z$. Thus,  taking the covariant differential $d^D$  of (\ref{equation1}) yields
\be 
df\wedge {{\rm Ric}} + fC= \tilde{i}_{\nabla f}R+ \frac s{n-1} df\wedge g.\label{equation2}
\ee
In this context, for a $1$-form $\phi$ and a symmetric $2$-tensor $h$, $\phi\wedge h$ is defined as 
$$
\phi \wedge h(X,Y, Z)= \phi (X)h(Y,Z)-\phi (Y)h(X,Z).
$$
For clarify, we define two  interior products $i_X$ and  $\tilde{i}_X$ for any vector field $X$. These operators  act on  a tensor  by 
substituting  $X$ into the first  and the  last slots, respectively. For instance, when acting on the Cotton tensor $C$, we have
$$ 
i_{X}C(Y,Z)= C(X, Y,Z)
$$
and
$$
 \tilde{i}_{X}C(Y,Z)= C(Y,Z,X)
 $$
 for any vector fields $Y$ and $Z$.
 
 With these preliminaries, we establish the following integral identity.
 
 \begin{lem}\label{lem2026-4-22-28-1}
 Let $(M^n, g, f)$ be an $n$-dimensional $(n\ge 4)$ compact vacuum static space satisfying (\ref{equation1}). Then
$$
{\rm div}(i_{\na f}C) = - i_{\na f}{\rm div}\, C,
$$
\be
\int_M f\langle i_{\na f}C, {{\rm Ric}}\rangle \, dv_g= \int_M {\rm div}\, C(\na f, \na f)\, dv_g\label{eqn2026-4-22-10}
\ee
and
$$
\int_M f^2 \langle {\rm div} C, {{\rm Ric}}\rangle \, dv_g+ \int_M {\rm div}\, C(\na f, \na f) \, dv_g= - \int_M  f\langle {\rm div}^2 C, \na f\rangle \, dv_g.
$$
\end{lem}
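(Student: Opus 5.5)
The plan is to prove all three identities by direct local computation in an orthonormal frame, using only three facts. First, $C$ is skew in its first two slots and all its traces vanish. Second, the Hessian $Ddf$ is symmetric. Third, the static equation (\ref{equation1}) lets us replace $f\,{\rm Ric}$ by $Ddf+\frac{sf}{n-1}g$. Since $M$ is compact, divergences integrate to zero. I use the conventions ${\rm div}\,C(X,Y)=(D_{e_i}C)(e_i,X,Y)$ and ${\rm div}^2C(Y)=(D_{e_j}{\rm div}\,C)(e_j,Y)$, and the divergence of a $2$-tensor is taken in its first slot.

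\textbf{First identity.} Compute
$$
{\rm div}(i_{\na f}C)(Y)=D_{e_i}\big(C(\na f,e_i,Y)\big)-C(\na f,D_{e_i}e_i,Y)-C(\na f,e_i,D_{e_i}Y)
$$
at a point where $D e_i=0$. This equals $C(D_{e_i}\na f,e_i,Y)+(D_{e_i}C)(\na f,e_i,Y)$. The first term is $\sum_{i,j} f_{ij}C_{jiY}$. It vanishes because $f_{ij}$ is symmetric and $C_{jiY}$ is skew in $i,j$. For the second term, skew-symmetry passes to $D_{e_i}C$, so it equals $-(D_{e_i}C)(e_i,\na f,Y)=-{\rm div}\,C(\na f,Y)$. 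This gives ${\rm div}(i_{\na f}C)=-i_{\na f}{\rm div}\,C$.

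\textbf{Second identity (\ref{eqn2026-4-22-10}).} By (\ref{equation1}),
$$
f\langle i_{\na f}C,{\rm Ric}\rangle=C(\na f,e_j,e_k)f_{jk}+\tfrac{sf}{n-1}C(\na f,e_j,e_j).
$$
The last term is zero since $C_{ijj}=0$. Next, $D_{e_j}\big(C(\na f,e_j,e_k)f_k\big)={\rm div}(i_{\na f}C)(\na f)+C(\na f,e_j,e_k)f_{kj}$. Integrating, and using the symmetry of the Hessian together with the first identity, gives
$$
\int_M C(\na f,e_j,e_k)f_{jk}=\int_M {\rm div}\,C(\na f,\na f).
$$

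\textbf{Third identity.} Again substitute (\ref{equation1}):
$$
f^2\langle{\rm div}\,C,{\rm Ric}\rangle=f\,({\rm div}\,C)_{jk}f_{jk}+\tfrac{sf^2}{n-1}\,{\rm tr}\,{\rm div}\,C.
$$
The trace term vanishes because ${\rm tr}\,{\rm div}\,C=(D_{e_i}C)_{ijj}=0$. Since $f_{jk}=D_j f_k$, integrate by parts in the index $j$, which is the slot on which ${\rm div}^2$ contracts. This yields
$$
\int_M f({\rm div}\,C)_{jk}f_{jk}=-\int_M ({\rm div}\,C)(\na f,\na f)-\int_M f\,f_k\,D_j({\rm div}\,C)_{jk}.
$$
The last integrand is $f\langle{\rm div}^2C,\na f\rangle$, and rearranging gives the stated identity.

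The only delicate point is bookkeeping. One must integrate by parts on the slot of the Hessian that matches the contraction defining ${\rm div}^2C$, because ${\rm div}\,C$ need not be symmetric. The symmetry of $f_{jk}$ is what permits this choice.
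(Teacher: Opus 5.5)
Your proposal is correct and is essentially the paper's proof. The first identity comes from the symmetry of $Ddf$ against the skew-symmetry of $C$ in its first two slots. The second comes from integrating the divergence of $i_{\nabla f}C(\cdot,\nabla f)$, after the $g$-term drops out because $C$ is trace-free. The third comes from substituting (\ref{equation1}) and integrating $f\langle {\rm div}\,C, Ddf\rangle$ by parts in the first slot.

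Your closing caution about choosing the slot is harmless but unnecessary. For constant scalar curvature, ${\rm div}\,C=-\delta d^D{\rm Ric}$ is in fact symmetric by Lemma~\ref{lem2026-4-23-10}.
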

\begin{proof}
The first identity follows directly from the definition of divergence. For the second identity,  since $\langle i_{\na f}C, g\rangle = 0$, we have
\bea
{\rm div}\left(i_{\na f}\, C(\na f)\right) &=& \langle {\rm div}(i_{\na f}C), \na f\rangle + \langle i_{\na f}C, Ddf\rangle\\
&=&
-\langle i_{\na f}{\rm div}\, C, \na f\rangle + f \langle i_{\na f}C, {{\rm Ric}}\rangle
\eea
and so
\bea
\int_M f\langle i_{\na f}C, {{\rm Ric}}\rangle \, dv_g= \int_M {\rm div}\, C(\na f, \na f)\, dv_g.
\eea
Finally, since $\langle {\rm div}\,C, g\rangle =0$, from the vacuum static equation, we have the following
\bea
\int_M f^2 \langle {\rm div}\,  C, {{\rm Ric}}\rangle \, dv_g
&=&
\int_M f\langle {\rm div} \, C, f{{\rm Ric}}\rangle \, dv_g = \int_M f\langle {\rm div}\,  C, Ddf\rangle\, dv_g\\
&=&
-\int_M \langle {\rm div}(f{\rm div}\, C), \na f\rangle\, dv_g\\
&=&
- \int_M \left( \langle i_{\na f}{\rm div}\, C, \na f\rangle + f\langle {\rm div}^2 C, \na f\rangle\right) \, dv_g\\
&=&
- \int_M {\rm div}\, C(\na f, \na f) \, dv_g- \int_M  f\langle {\rm div}^2 C, \na f\rangle\, dv_g.
\eea
\end{proof}

From Lemma~\ref{lem2026-4-22-28-1}, we  have
\be
\int_M f\langle i_{\na f}C, {{\rm Ric}}\rangle \, dv_g+ \int_M f^2 \langle {\rm div} \, C, {{\rm Ric}}\rangle \, dv_g = - \int_M  f\langle {\rm div}^2 C, \na f\rangle\, dv_g.\label{eqn2026-4-23-15}
\ee
By taking the divergence of both sides in (\ref{equation2}), we have the following.

 \begin{lem} \label{lem2026-4-24-5}
 Let $(M^n, g, f)$ be an $n$-dimensional  $(n\ge 4)$ compact vacuum static space satisfying (\ref{equation1}). Then
 \be
\int_M \left( 4 f\langle i_{\na f}C, {{\rm Ric}}\rangle +  f^2\langle {\rm div}\, C, {{\rm Ric}}\rangle \right)\, dv_g=  \int_M \left( 3f^2 E  + f^2 |D{\rm Ric}|^2\right) \, dv_g. \label{eqn2026-4-24-3}
\ee
 \end{lem}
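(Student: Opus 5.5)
Following the hint before the statement, I would take the divergence of both sides of (\ref{equation2}), pair the result with $f^2\,{\rm Ric}$ (or $f\,{\rm Ric}$ and a suitable weight), and integrate over $M$. In a frame, (\ref{equation2}) reads
$$f_iR_{jk}-f_jR_{ik}+fC_{ijk}=R_{ijk\ell}f_\ell+\frac{s}{n-1}(f_i\de_{jk}-f_j\de_{ik}).$$
Taking ${\rm div}$ in the first index gives, on the left,
$$(\tr f)R_{jk}+f_iR_{jk;i}-f_{ij}R_{ik}-f_jR_{ik;i}+f_iC_{ijk}+f\,C_{ijk;i}.$$
Here $R_{ik;i}=0$ because $s$ is constant, and $f_{ij}=fR_{ij}-\frac{sf}{n-1}\de_{ij}$ by (\ref{equation1}). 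On the right, the contracted second Bianchi identity gives $R_{ijk\ell;i}$ as a Cotton-type term (a difference of derivatives of Ric, i.e.\ $\pm C$). The remaining term is $R_{ijk\ell}f_{\ell i}=f\,\mathring R({\rm Ric})$ up to sign, plus a multiple of Ric coming from the $g$-part of $Ddf$.

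I would then contract with $f\,R_{jk}$ and integrate. The resulting terms are handled as follows.
\begin{itemize}
\item $f\,{\rm div}\,C$ paired with $f\,{\rm Ric}$ gives $f^2\langle{\rm div}\,C,{\rm Ric}\rangle$.
\item $f_iC_{ijk}$ and the Bianchi term $f_\ell(\text{Cotton})$ paired with $f\,{\rm Ric}$ each give $f\langle i_{\na f}C,{\rm Ric}\rangle$. One uses the cyclic symmetry and skew-symmetry of $C$ to identify $\tilde i_{\na f}C$ contractions with $i_{\na f}C$ contractions.
\item $-f_{ij}R_{ik}$ paired with $f\,{\rm Ric}$ gives $-f^2\langle{\rm Ric}\circ{\rm Ric},{\rm Ric}\rangle$ plus lower terms.
\item The curvature term gives $f^2\langle\mathring R({\rm Ric}),{\rm Ric}\rangle$.
\item $\int f f_iR_{jk;i}R_{jk}=\frac12\int f\langle\na f,\na|{\rm Ric}|^2\rangle$. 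Integrating by parts and using $\tr f=-\frac{s}{n-1}f$ together with Lemma~\ref{lem2026-4-24-2} (multiplied by $f^2$ and integrated), this converts into $\int f^2(E+\langle{\rm div}\,C,{\rm Ric}\rangle+|D{\rm Ric}|^2)$ and $\int|\na f|^2|{\rm Ric}|^2$-type terms.
\end{itemize}

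Terms of the form $\int|\na f|^2|{\rm Ric}|^2$ and $\int f^2 s|{\rm Ric}|^2$ should cancel. To see this I would compute $\frac12\tr(f^2)=f\tr f+|\na f|^2$ against $|{\rm Ric}|^2$, and use $\int f f_iR_{jk}R_{jk;i}$ twice. The terms with $s\,f^2|{\rm Ric}|^2$ and $s^2f^2$ cancel by the trace identities.

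Collecting everything, I expect the integral identity
$$\int 3f^2E+f^2|D{\rm Ric}|^2=\int 4f\langle i_{\na f}C,{\rm Ric}\rangle+f^2\langle{\rm div}\,C,{\rm Ric}\rangle.$$
Some coefficients may need the identity (\ref{eqn2026-4-22-10}) of Lemma~\ref{lem2026-4-22-28-1} to trade $\int{\rm div}\,C(\na f,\na f)$ for $\int f\langle i_{\na f}C,{\rm Ric}\rangle$. I would also use $\frac12|C|^2$-type identities such as (\ref{eqn2018-1-23-1}) if $|C|^2$ terms appear.

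The main obstacle is the bookkeeping of coefficients, in particular obtaining exactly $3$ in front of $f^2E$ and $4$ in front of the Cotton term. These coefficients depend on the sign conventions for $R$ (Besse's) and for $\tilde i_{\na f}R$, so I would carefully fix index conventions at the start. The step I would attack first is the divergence of $\tilde i_{\na f}R$, since it produces both a $\mathring R({\rm Ric})$ term and a Cotton term whose sign decides the final coefficients.
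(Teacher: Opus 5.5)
Your plan is the paper's proof: take the divergence of (\ref{equation2}) term by term (the four pieces you identify are exactly the paper's), pair the result with $f\,{\rm Ric}$, and convert $\int_M f\langle D_{\na f}{\rm Ric},{\rm Ric}\rangle\,dv_g$ using integration by parts and Lemma~\ref{lem2026-4-24-2}. The bookkeeping closes without any of your hedges. The pairing gives pointwise $2\langle i_{\na f}C,{\rm Ric}\rangle+f\langle{\rm div}\,C,{\rm Ric}\rangle=fE-\langle D_{\na f}{\rm Ric},{\rm Ric}\rangle$, since the two Cotton terms add and $\langle\tilde i_{\na f}C,{\rm Ric}\rangle=0$. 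Then the direct identity $\int_M f\langle D_{\na f}{\rm Ric},{\rm Ric}\rangle\,dv_g=-\frac14\int_M f^2\Delta|{\rm Ric}|^2\,dv_g$ gives the coefficients $3$ and $4$ after doubling, with no need for Lemma~\ref{lem2026-4-22-28-1}, $|C|^2$ identities, or any $|\na f|^2|{\rm Ric}|^2$ terms.
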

 \begin{proof}

By computing the divergence of each term in (\ref{equation2}), and applying both the definition of divergence and the vacuum static equation, we obtain the following.
\begin{itemize}
\item $\dis{{\rm div}(df\wedge {{\rm Ric}}) = D_{\na f}{{\rm Ric}} - f {{\rm Ric}}\circ {{\rm Ric}}}$\,;
\item $\dis{{\rm div}(fC) = i_{\na f}C + f{\rm div}\, C}$\,;
\item $\dis{{\rm div}(\tilde i_{\na f} R) = -i_{\na f}C - \tilde i_{\na f}C -f{\mathring R}({{\rm Ric}}) + \frac{sf}{n-1}{\rm Ric}}$\,;
\item $\dis{{\rm div}\left(df\wedge g\right) =-f {{\rm Ric}}}$.
\end{itemize}
Here, in the third identity, we used the fact that the cyclic summation of $C$ vanishes. 
Substituting these into the equation obtained from (\ref{equation2})  by taking the divergence operator, and taking the inner product with 
${\rm Ric}$, we obtain
\be
2\langle i_{\na f}C, {{\rm Ric}}\rangle + f\langle {\rm div}C, {{\rm Ric}}\rangle  = fE - \langle D_{\na f}{{\rm Ric}},{{\rm Ric}}\rangle.
\label{eqn2026-4-22-3-1}
\ee
Here, we used the fact  $\langle {\tilde i}_{\na f}C, {{\rm Ric}}\rangle = 0$. Note that
$$
\int_M f \langle D_{\na f}{{\rm Ric}},{{\rm Ric}}\rangle \, dv_g=  \frac12 \int_M f \langle \na f, \na |{\rm Ric}|^2\rangle   \, dv_g= - \frac14 \int_M f^2 \Delta |{\rm Ric}|^2\, dv_g.
$$ 
Thus, multiplying  (\ref{eqn2026-4-22-3-1}) by $f$ and integrating it over $M$, we can get
\be
\int_M \left( 2 f\langle i_{\na f}C, {{\rm Ric}}\rangle + f^2\langle {\rm div}\, C, {{\rm Ric}}\rangle \right)\, dv_g= \int_M f^2 E\, dv_g + \frac14  \int_M f^2 \Delta |{\rm Ric}|^2\, dv_g.\label{eqn2026-4-24-1}
\ee
Finally, substituting (\ref{eqn2026-4-22-21-1}) into (\ref{eqn2026-4-24-1}), we obtain
\bea
\int_M\left( 2 f\langle i_{\na f}C, {{\rm Ric}}\rangle + \frac12 f^2\langle {\rm div}\, C, {{\rm Ric}}\rangle \right) \, dv_g= \frac 32 \int_M f^2 E  \, dv_g+\frac12 \int_M f^2 |D{\rm Ric}|^2\, dv_g. 
\eea
 \end{proof}

 \begin{lem} \label{lem2026-4-24-6}
Let $(M^n, g, f)$ be an $n$-dimensional  $(n\ge 4)$ compact vacuum static space satisfying (\ref{equation1}). Then
$$
\int_M \left( 2f \langle i_{\na f}C, {{\rm Ric}}\rangle + f^2 \langle {\rm div}\,  C, {{\rm Ric}}\rangle \right)\, dv_g= -\frac12 \int_M f^2|C|^2 \, dv_g.$$
\end{lem}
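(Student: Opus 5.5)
The identity should follow by weighting the pointwise identity (\ref{eqn2018-1-23-1}) with $f^2$ and integrating by parts once. No use of the vacuum static equation (\ref{equation1}) seems to be needed beyond the constancy of $s$, which is what gives $C=d^D{\rm Ric}$ and hence (\ref{eqn2018-1-23-1}).

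First I rewrite (\ref{eqn2018-1-23-1}) using $\delta C=-{\rm div}\,C$. Introduce the vector field $X$ with components $X_i = C_{ijk}R_{jk}$, that is, $\langle X, Y\rangle = \langle i_Y C, {\rm Ric}\rangle$. Then (\ref{eqn2018-1-23-1}) reads
$$
\frac12|C|^2 = {\rm div}\, X - \langle {\rm div}\, C, {\rm Ric}\rangle .
$$
Next I multiply by $f^2$ and integrate over the compact manifold $M$. Since ${\rm div}(f^2X)=f^2{\rm div}\,X+2f\langle \na f, X\rangle$, the divergence theorem gives
$$
\int_M f^2\,{\rm div}\,X\, dv_g = -2\int_M f \langle \na f, X\rangle\, dv_g = -2\int_M f\langle i_{\na f}C, {\rm Ric}\rangle\, dv_g .
$$
Here I use $\langle \na f, X\rangle = f_iC_{ijk}R_{jk} = \langle i_{\na f}C,{\rm Ric}\rangle$.

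Substituting this into the integrated identity gives
$$
\frac12\int_M f^2|C|^2\, dv_g = -\int_M \left(2f\langle i_{\na f}C,{\rm Ric}\rangle + f^2\langle {\rm div}\,C,{\rm Ric}\rangle\right) dv_g ,
$$
which is the claim. The only point needing care is sign bookkeeping: the convention $\delta C(X,Y)=-D_{e_i}C(e_i,X,Y)$, and the fact that $i_{\na f}$ inserts $\na f$ into the first slot of $C$, which is exactly the contracted index $i$ in $X_i=C_{ijk}R_{jk}$. I would double-check that (\ref{eqn2018-1-23-1}) indeed carries the factor $\frac12$ obtained from the skew-symmetry computation preceding it.
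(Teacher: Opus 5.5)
Your argument is correct and is essentially the paper's proof: the paper likewise rewrites (\ref{eqn2018-1-23-1}) as $\langle {\rm div}\, C, {\rm Ric}\rangle = (C_{ijk}R_{jk})_{;i} - \frac12 |C|^2$, multiplies by $f^2$, moves $f^2$ inside the divergence at the cost of the term $-2f\langle i_{\na f}C, {\rm Ric}\rangle$, and integrates. Your remark that only the constancy of $s$ is used is accurate, and the factor $\frac12$ you wanted to double-check does hold, since $C_{ijk}R_{ik;j} = -C_{ijk}R_{jk;i}$ by the skew-symmetry of $C$ in its first two indices.
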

\begin{proof}
Let us introduce a local frame $\{e_i\}$  and denote $C_{ijk} = C(e_i, e_j, e_k)$. By (\ref{eqn2018-1-23-1}),  we have
$$
\langle {\rm div} \, C, {{\rm Ric}}\rangle = (C_{ijk}R_{jk})_{;i}-\frac{1}{2}|C|^2.
$$
Thus,
 \bea
 f^2 \langle {\rm div} \, C, {{\rm Ric}}\rangle &=&  f^2 (C_{ijk}R_{jk})_{;i} - \frac{1}{2}f^2|C|^2\\
 &=&
 (f^2C_{ijk}R_{jk})_{;i} - 2ff_i C_{ijk}R_{jk} - \frac{1}{2}f^2|C|^2\\
 &=&
 (f^2C_{ijk}R_{jk})_{;i}- 2f \langle i_{\na f}C, {{\rm Ric}}\rangle - \frac{1}{2}f^2|C|^2.
 \eea
Integrating this over $M$ and using the divergence theorem, we obtain
\bea
\int_M \left( 2f \langle i_{\na f}C, {{\rm Ric}}\rangle + f^2 \langle {\rm div} C, {{\rm Ric}}\rangle \right) \, dv_g= -\frac 12 \int_M f^2|C|^2\, dv_g.\label{eqn2026-4-22-9-1}
\eea
\end{proof}

\begin{lem} \label{lem2026-4-24-9}
Let $(M^n, g, f)$ be an $n$-dimensional  $(n\ge 4)$  compact vacuum static space satisfying (\ref{equation1}). Then
$$
\int_M \left( \frac 32 f^2 E  + \frac12 f^2 |D{\rm Ric}|^2 + \frac 34 f^2|C|^2 \right) \, dv_g= \int_M  f\langle {\rm div}^2 C, \na f\rangle\, dv_g.$$
\end{lem}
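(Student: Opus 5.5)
Introduce the abbreviations
$$A=\int_M f\langle i_{\na f}C, {\rm Ric}\rangle\, dv_g,\quad B=\int_M f^2\langle {\rm div}\, C,{\rm Ric}\rangle\, dv_g,\quad Q=\int_M f\langle {\rm div}^2 C,\na f\rangle\, dv_g.$$
The proof then reduces to linear algebra among three integral identities already established. Each involves only $A$, $B$, $Q$ and the quantities $\int f^2E$, $\int f^2|D{\rm Ric}|^2$, $\int f^2|C|^2$.

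The three identities are:
\begin{itemize}
\item (\ref{eqn2026-4-23-15}) gives $A+B=-Q$.
\item Lemma~\ref{lem2026-4-24-5} gives $4A+B=\int_M (3f^2E+f^2|D{\rm Ric}|^2)\,dv_g$.
\item Lemma~\ref{lem2026-4-24-6} gives $2A+B=-\frac12\int_M f^2|C|^2\,dv_g$.
\end{itemize}
The identities for $4A+B$ and $2A+B$ form a linear system in $A$ and $B$, which we solve.

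Subtracting the third identity from the second gives
$$2A=\int_M\left(3f^2E+f^2|D{\rm Ric}|^2+\tfrac12 f^2|C|^2\right)dv_g,$$
so $A=\int_M(\frac32 f^2E+\frac12 f^2|D{\rm Ric}|^2+\frac14 f^2|C|^2)\,dv_g$. Back-substituting into the third identity gives
$$B=-\tfrac12\int_M f^2|C|^2\,dv_g-2A.$$

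Now compute $-Q=A+B=-A-\frac12\int_M f^2|C|^2\,dv_g$. Hence
$$Q=A+\tfrac12\int_M f^2|C|^2\,dv_g=\int_M\left(\tfrac32 f^2E+\tfrac12 f^2|D{\rm Ric}|^2+\tfrac34 f^2|C|^2\right)dv_g,$$
which is exactly the claimed identity.

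There is no real obstacle beyond bookkeeping. The only point needing care is the sign conventions. The identities use ${\rm div}\,C$ (not $\delta C=-{\rm div}\,C$), and (\ref{eqn2026-4-23-15}) comes from combining (\ref{eqn2026-4-22-10}) with the last identity of Lemma~\ref{lem2026-4-22-28-1}. I would double-check those two before combining.
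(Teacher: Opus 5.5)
Your proof is correct and is essentially the paper's argument. Like the paper, you solve the two identities of Lemmas~\ref{lem2026-4-24-5} and \ref{lem2026-4-24-6} for $A$ and $B$, then use (\ref{eqn2026-4-23-15}), i.e.\ $A+B=-Q$, which comes from Lemma~\ref{lem2026-4-22-28-1}, to identify $-(A+B)$ with the left-hand side.
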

\begin{proof}
By Lemma~\ref{lem2026-4-24-5} and Lemma~\ref{lem2026-4-24-6}, we have
\bea
- \int_M f^2 \langle {\rm div} \, C, {{\rm Ric}}\rangle \, dv_g= \int_M \left( 3f^2 E  + f^2 |D{\rm Ric}|^2 +f^2|C|^2\right)\, dv_g. \label{eqn2026-4-24-7}
\eea
and
\bea
\int_M f \langle i_{\na f}C, {{\rm Ric}}\rangle \, dv_g= \int_M \left( \frac 32 f^2 E  + \frac 12 f^2 |D{\rm Ric}|^2 + \frac 14 f^2|C|^2\right)\, dv_g. \label{eqn2026-4-24-8}
\eea
Subtracting these two, we obtain
\bea
\int_M\left( \frac 32 f^2 E  + \frac 12 f^2 |D{\rm Ric}|^2 + \frac 34 f^2|C|^2 \right)\, dv_g= -\int_M\left( f \langle i_{\na f}C, {{\rm Ric}}\rangle +f^2 \langle {\rm div} C, {{\rm Ric}}\rangle\right)\, dv_g.
  \eea
 Thus, from the third identity in Lemma~\ref{lem2026-4-22-28-1},
 \bea
\int_M\left( \frac 32 f^2 E  + \frac12 f^2 |D{\rm Ric}|^2 + \frac 34 f^2|C|^2\right)\, dv_g = \int_M  f\langle {\rm div}^2 C, \na f\rangle \, dv_g. \label{eqn2026-4-24-10}
 \eea
\end{proof}

\begin{lem}\label{lem2026-4-22-35-1}
We have the following
$$
\int_M  f\langle {\rm div}^2 C, \na f\rangle \, dv_g= - \frac12 \int_M f^2 {\rm div}^3 C\, dv_g.$$
\end{lem}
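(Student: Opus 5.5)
The identity is a single integration by parts. The key observation is that $f\na f = \frac12 \na (f^2)$. Here ${\rm div}^2 C$ denotes the $1$-form obtained by taking two divergences of $C$, namely $({\rm div}^2 C)(X) = D_{e_j}D_{e_i}C(e_i,e_j,X)$, and ${\rm div}^3 C$ is its divergence, a scalar function. Then
$$
f\langle {\rm div}^2 C, \na f\rangle = \frac12 \langle {\rm div}^2 C, \na (f^2)\rangle = \frac12 {\rm div}\left(f^2\, {\rm div}^2 C\right) - \frac12 f^2\, {\rm div}^3 C,
$$
where the vector field dual to ${\rm div}^2 C$ is used in the divergence. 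Integrating over the compact manifold $M$ and applying the divergence theorem, the divergence term drops out. This gives
$$
\int_M f\langle {\rm div}^2 C, \na f\rangle \, dv_g = -\frac12 \int_M f^2\, {\rm div}^3 C \, dv_g,
$$
which is the claim. No use of the vacuum static equation is needed beyond smoothness of $f$ and compactness of $M$, which has no boundary.

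The only point needing care is consistency of conventions. The ordering of the slots in which the divergences are taken must match the ordering used in Lemma~\ref{lem2026-4-22-28-1}. There, ${\rm div}(f\,{\rm div}\, C)$ was expanded with ${\rm div}^2 C$ arising as the divergence of the symmetric-or-not $2$-tensor ${\rm div}\, C$ in its first slot.

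I will define ${\rm div}^2 C$ exactly as it appears there. Then ${\rm div}^3 C := {\rm div}({\rm div}^2 C)$ is the scalar in the statement, and it is consistent with the paper's identification ${\rm div}^4 \mathcal W \ge 0 \iff {\rm div}^3 C\ge 0$ via (\ref{eqn2016-12-3-16}).

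One might worry whether ${\rm div}\, C$ is symmetric, since the choice of slot would then matter. However, ${\rm div}^2 C$ is a fixed $1$-form once defined, so the integration by parts is valid regardless. I expect no real obstacle; the main task is just to state the index convention clearly.
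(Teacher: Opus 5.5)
Your proposal is correct and is exactly the paper's argument: expand ${\rm div}\left(f^2\,{\rm div}^2 C\right) = 2f\langle {\rm div}^2 C, \na f\rangle + f^2\,{\rm div}^3 C$ and integrate over the closed manifold $M$. As you note, this is pure integration by parts, so it holds for any fixed $1$-form ${\rm div}^2 C$ regardless of slot convention.
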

\begin{proof}
Since
\bea
{\rm div}(f^2 {\rm div}^2 C) &=& \langle {\rm div}^2 C, \na f^2\rangle + f^2 {\rm div}^3 C\\
&=&
2f\langle {\rm div}^2 C, \na f\rangle + f^2 {\rm div}^3 C,
\eea
we get, by integrating,
\bea
\int_M f \langle {\rm div}^2 C, \na f\rangle \, dv_g= - \frac12 \int_M f^2 {\rm div}^3 C\, dv_g.
\eea

\end{proof}

Now we are in position to prove Theorem~\ref{thm2026-4-24-11} and Theorem~\ref{thm2026-4-24-12}.

\vspace{.1in}
\noindent
{\bf Proof of Theorem~\ref{thm2026-4-24-11} and Theorem~\ref{thm2026-4-24-12}.}
By Lemma~\ref{lem2026-4-24-9} and Lemma~\ref{lem2026-4-22-35-1}, we have
\be
\int_M \left( \frac 32 f^2 E  + \frac 12 f^2 |D{\rm Ric}|^2 + \frac 34 f^2|C|^2 \right) \, dv_g= - \frac 12 \int_M f^2 {\rm div}^3 C \, dv_g. \label{eqn2026-4-24-15}
\ee
Since ${\rm div}^4 \mathcal W \geq 0$, or equivalently ${\rm div}^3 C \geq 0$, it follows from (\ref{eqn2026-4-24-15}) that
$$ 
E = 0\quad \mbox{and}\quad D{{\rm Ric}} = 0.
$$
In particular, if $K_M >0$, it follows from (\ref{eqn2026-4-24-17}) that
$\mathring{{\rm Ric}} = 0$, which implies $(M, g)$ is Einstein. Finally, by Obata's result \cite{oba}, $(M, g)$ is isometric to  a standard sphere.
\hfill{\large{$\Box$}}

\begin{rem} 
{\rm
We would like to point out that nonnegative sectional curvature condition $K_M \ge 0$ does not imply vanishing of $\mathring{{\rm Ric}}$.
Let $\{e_i\}_{i=1}^n$  be the eigenvectors of $\mathring{{\rm Ric}}$ and let $\lambda_i$ be the corresponding eigenvalues. Moreover, let $K_{ij}$ be the sectional curvature of the two-plane spanned by $e_i$ and $e_j$.  In \cite{cat}, Catino showed that 
$$
-E=\langle \mathring{R}({{\rm Ric}}), {{\rm Ric}}\rangle - \langle {{\rm Ric}}\circ {{\rm Ric}}, {{\rm Ric}}\rangle  \, \leq  -\sum_{i<j}(\lambda_i -\lambda_j)^2 K_{ij}\leq 0.
$$
Thus, if $K_{ij} = 0$ for some $i, j (i \ne j)$, then  $\lambda_i=\lambda_j$ cannot be guaranteed.
}
\end{rem}

\section{Critical Point Metrics}

In this section, we consider critical point metrics. Recall that a CPE metric is a triple $(M^n, g, f)$ for $n \ge 3,$ where $(M, g)$ is a compact (without boundary), oriented Riemannian manifold with positive constant scalar curvature and $f: M \to {\mathbb R}$ is a nonconstant smooth function satisfying the following critical point equation (CPE for short)
\be
(1+f){{\rm Ric}}=Ddf+\frac{s(nf +(n-1))}{n(n-1)}g. \label{eqn2026-4-24-20}
\ee
Comparing (\ref{eqn2026-4-24-20}) with the vacuum static equation (\ref{equation1}), one observes that they are structurally very 
 similar, with the potential function $f$ in the vacuum static case corresponding to $1+f$ in the CPE case. This similarity implies that 
the identities derived for vacuum static spaces in Section 3 remain valid for the CPE case by replacing
 $f$ by $1+f$. 
From this reason,  we briefly present  results  necessary for the proof of  Theorem~\ref{thm2026-4-24-13}, omitting 
detail computations.

First, since the scalar curvature $s$ is  constant for CPE metrics, we have 
$$
d^D{{\rm Ric}} = d^D{\mathring {{\rm Ric}}} = C.
$$
Taking the covariant differential  $d^D$ of (\ref{eqn2026-4-24-20}), we obtain
\be
df \wedge r + (1+f)C  = \tilde i_{\na f}R + \frac{s}{n-1} df \wedge g.\label{eqn2026-4-22-2}
\ee
Taking the divergence of each term in (\ref{eqn2026-4-22-2}), we get the following.
\begin{itemize}
\item $\dis{{\rm div}(df\wedge r) = D_{\na f}r - (1+f)r\circ r + \frac{s}{n}r} \,;$
\item $\dis{{\rm div}((1+f)C) = i_{\na f}C + (1+f){\rm div}C} \,;$
\item $\dis{{\rm div}(\tilde i_{\na f} R) = -i_{\na f}C - \tilde i_{\na f}C -(1+f){\mathring R}(r)  +\frac{s[nf+(n-1)]}{n(n-1)} r} \,;$
\item $\dis{{\rm div}\left(\frac{s}{n-1}df\wedge g\right) =-\frac{s}{n-1}(1+f)r + \frac{s^2}{n(n-1)}g}.$
\end{itemize}
Substituting these into the equation obtained from (\ref{eqn2026-4-22-2})  by taking the divergence operator, and taking the inner product with 
${\rm Ric}$, we obtain
\be
&&2\langle i_{\na f}C, {{\rm Ric}}\rangle + (1+f)\langle {\rm div}C, {{\rm Ric}}\rangle \nonumber\\
&&\qquad =
(1+f)E - \langle D_{\na f}{{\rm Ric}}, {{\rm Ric}}\rangle - \frac{s}{n-1}|{{\rm Ric}}|^2 + \frac{s^3}{n(n-1)}.\label{eqn2026-4-22-3}
\ee
Here, recall that
$$
E: = \langle {{\rm Ric}}\circ {{\rm Ric}}, {{\rm Ric}}\rangle - \langle{\mathring R}({{\rm Ric}}), {{\rm Ric}}\rangle.
$$
Multiplying (\ref{eqn2026-4-22-3}) by $1+f$, we have
\be
&&2(1+f)\langle i_{\na f}C, {{\rm Ric}}\rangle + (1+f)^2\langle {\rm div}\, C, {{\rm Ric}}\rangle \label{eqn2026-4-22-4}\\
&&\qquad =
(1+f)^2E - (1+f)\langle D_{\na f}{{\rm Ric}}, {\ric}\rangle\nonumber \\
& &\qquad - \frac{s}{n-1}(1+f)|{{\rm Ric}}|^2 + \frac{s^3}{n(n-1)}(1+f). \nonumber
\ee
One  difference between  the CPE and the vacuum static equation is the following. 
Since ${\rm div}(i_{\na f} \mathring {{\rm Ric}}) = (1+f)|\mathring {\ric}|^2$, we have
$$
 \int_M (1+f)|\mathring{\ric}|^2\, dv_g = 0.
$$
Thus, recalling $\int_M f = 0$ and $|{\ric}|^2  = |\mathring {\ric}|^2 + \frac{s^2}{n}$,
we have
\bea
&&\int_M \left(- \frac{s}{n-1}(1+f)|{\ric}|^2 + \frac{s^3}{n(n-1)}(1+f)\right)\, dv_g\\
&&\qquad
=
\int_M \left(- \frac{s}{n-1}(1+f)|\mathring{\ric}|^2 - \frac{s^3}{n(n-1)} + \frac{s^3}{n(n-1)}(1+f)\right)\, dv_g\\
&&\qquad =
0.
\eea
Integrating (\ref{eqn2026-4-22-4}) over $M$ and using the comments shown above, we get
\bea
&&\int_M \left( 2(1+f)\langle i_{\na f}C, {\ric}\rangle + (1+f)^2\langle {\rm div}C, {\ric}\rangle\right) \, dv_g\nonumber\\
&& \qquad =  \int_M (1+f)^2E\, dv_g + \frac14 \int_M (1+f)^2  \Delta |{\ric}|^2\, dv_g.
\eea
By applying Lemma~\ref{lem2026-4-24-2}, we obtain the following identity,  which corresponds to Lemma~\ref{lem2026-4-24-5} in
the vacuum static case.

\begin{lem}\label{lem2026-4-25-1}
Let $(M^n, g, f)$ be a solution of the CPE. Then
\bea
 \lefteqn{\int_M \left( 4(1+f)\langle i_{\na f}C, {\ric}\rangle + (1+f)^2\langle {\rm div}\, C, {\ric}\rangle\right) \, dv_g}\\
  &=&
\int_M \left( 3(1+f)^2E +  (1+f)^2  |D {\ric}|^2\right) \, dv_g.
\eea
\end{lem}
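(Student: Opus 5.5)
The plan is to start from the integrated identity displayed just before the statement,
\[
\int_M \left( 2(1+f)\langle i_{\na f}C, {\ric}\rangle + (1+f)^2\langle {\rm div}C, {\ric}\rangle\right) dv_g = \int_M (1+f)^2E\, dv_g + \frac14 \int_M (1+f)^2 \Delta |{\ric}|^2\, dv_g,
\]
and to remove the Laplacian term using Lemma~\ref{lem2026-4-24-2}. This parallels the passage from (\ref{eqn2026-4-24-1}) to Lemma~\ref{lem2026-4-24-5} in the vacuum static case, with $f$ replaced by $1+f$.

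First I would justify the displayed identity. Integrate (\ref{eqn2026-4-22-4}) over $M$. The two $s$-terms integrate to zero by the computation above, which uses $\int_M(1+f)|\mathring{\ric}|^2=0$ and $\int_M f=0$. For the middle term, since $d(1+f)=df$, we have
\[
\int_M (1+f)\langle D_{\na f}{\ric},{\ric}\rangle = \frac12\int_M (1+f)\langle \na(1+f), \na|{\ric}|^2\rangle = \frac14\int_M \langle \na (1+f)^2,\na|{\ric}|^2\rangle .
\]
Integrating by parts with the paper's sign convention for $\Delta$ (as used in Lemma~\ref{lem2026-4-24-5}), this equals $-\frac14\int_M(1+f)^2\Delta|{\ric}|^2$. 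Moving it to the right-hand side gives the displayed identity.

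Next, since $s$ is constant for CPE metrics, Lemma~\ref{lem2026-4-24-2} applies and gives
\[
\frac12\Delta|{\ric}|^2 = E+\langle {\rm div}\,C,{\ric}\rangle + |D{\ric}|^2 .
\]
Substituting, the right-hand side becomes
\[
\int_M\left(\tfrac32(1+f)^2E+\tfrac12(1+f)^2\langle{\rm div}\,C,{\ric}\rangle+\tfrac12(1+f)^2|D{\ric}|^2\right)dv_g .
\]
Moving the ${\rm div}\,C$ term to the left leaves coefficient $\tfrac12$ on $(1+f)^2\langle{\rm div}\,C,{\ric}\rangle$ there. Multiplying the whole identity by $2$ then yields exactly the claimed formula, with coefficient $4$ on $(1+f)\langle i_{\na f}C,{\ric}\rangle$, coefficient $1$ on the ${\rm div}\,C$ term, and right-hand side $\int_M(3(1+f)^2E+(1+f)^2|D{\ric}|^2)$.

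The main obstacle is not this bookkeeping but confirming the ingredients behind (\ref{eqn2026-4-22-4}): the four divergence formulas and the vanishing of $\int_M(1+f)|\mathring{\ric}|^2$.

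For the latter, trace the CPE (\ref{eqn2026-4-24-20}) and use (\ref{cpet}) to get $Ddf=(1+f)\mathring{\ric}-\frac{s}{n-1}fg$. The contracted Bianchi identity gives ${\rm div}\,\mathring{\ric}=0$, and $\mathring{\ric}$ is traceless, so
\[
{\rm div}(i_{\na f}\mathring{\ric})=\langle Ddf,\mathring{\ric}\rangle=(1+f)|\mathring{\ric}|^2 .
\]
Integrating over $M$ gives $\int_M(1+f)|\mathring{\ric}|^2=0$.

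For the divergence formulas, I would check them term by term using $Ddf$ from the CPE. Also verify the third formula, where the cyclic identity for $C$ and the identity ${\rm div}\,\tilde i_{\na f}R$ produce the $\mathring R(r)$ term with factor $1+f$. Once these hold, the lemma follows by the routine substitution above.
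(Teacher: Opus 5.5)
Your proposal is correct and follows the paper's own route. You integrate (\ref{eqn2026-4-22-4}), kill the $s$-terms via $\int_M(1+f)|\mathring{\ric}|^2=0$ and $\int_M f=0$, integrate the $D_{\na f}\ric$ term by parts to produce $\frac14\int_M(1+f)^2\Delta|\ric|^2$, then substitute Lemma~\ref{lem2026-4-24-2} and multiply by $2$, exactly as the paper does.

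One harmless slip: tracing the CPE actually gives $Ddf=(1+f)\mathring{\ric}-\frac{s}{n(n-1)}f\,g$; your coefficient $\frac{s}{n-1}$ would contradict (\ref{cpet}). The $g$-part pairs to zero against $\mathring{\ric}$, so ${\rm div}(i_{\na f}\mathring{\ric})=(1+f)|\mathring{\ric}|^2$ is unaffected.
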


The result for the CPE case corresponding to Lemma~\ref{lem2026-4-24-6}  is the following.

\begin{lem}\label{lem2026-4-25-2}
Let $(M^n, g, f)$ be a solution of the CPE. Then
$$
\int_M \left( 2(1+f) \langle i_{\na f}C, {\ric}\rangle + (1+f)^2 \langle {\rm div}\,  C, {\ric}\rangle \right) \, dv_g= -\frac12 \int_M (1+f)^2|C|^2 \, dv_g.$$
\end{lem}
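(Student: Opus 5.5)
The argument is the one used for Lemma~\ref{lem2026-4-24-6}, with $f$ replaced by $1+f$. The CPE enters only through the fact that the scalar curvature is constant, so that $C=d^D\ric$ is the Cotton tensor and identity (\ref{eqn2018-1-23-1}) holds. Nothing specific to the vacuum static equation was used there, so the proof should carry over without change.

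I will work in a local orthonormal frame $\{e_i\}$ and use (\ref{eqn2018-1-23-1}) in the form
$$
\langle {\rm div}\, C, \ric\rangle = (C_{ijk}R_{jk})_{;i} - \frac12 |C|^2 ,
$$
which holds because $s$ is constant. I will multiply this by $(1+f)^2$ and move the weight inside the divergence by the product rule:
\bea
(1+f)^2(C_{ijk}R_{jk})_{;i} &=& \left((1+f)^2C_{ijk}R_{jk}\right)_{;i} - 2(1+f)f_iC_{ijk}R_{jk}\\
&=& \left((1+f)^2C_{ijk}R_{jk}\right)_{;i} - 2(1+f)\langle i_{\na f}C, \ric\rangle .
\eea
Here I use $\na(1+f)=\na f$ and $f_iC_{ijk}R_{jk} = \langle i_{\na f}C,\ric\rangle$, which is just the definition of $i_{\na f}$ (insertion into the first slot).

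Combining the two displays gives, pointwise,
$$
2(1+f)\langle i_{\na f}C, \ric\rangle + (1+f)^2\langle {\rm div}\, C, \ric\rangle = \left((1+f)^2C_{ijk}R_{jk}\right)_{;i} - \frac12 (1+f)^2|C|^2 .
$$
Integrating over the closed manifold $M$, the divergence term vanishes by the divergence theorem, and this is exactly the claimed identity.

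There is no real obstacle here. The only points to check are the sign convention ${\rm div}\,C(X,Y) = D_{e_i}C(e_i,X,Y)$, so that $C_{ijk;i}$ matches ${\rm div}\, C$ in (\ref{eqn2018-1-23-1}), and the fact that the vector field $(1+f)^2C_{ijk}R_{jk}\,e_i$ is globally defined and smooth on compact $M$.
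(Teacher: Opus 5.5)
Your proposal is correct and follows the paper's route: the paper obtains this lemma from the proof of Lemma~\ref{lem2026-4-24-6} with $f$ replaced by $1+f$, that is, it writes $\langle {\rm div}\, C, \ric\rangle = (C_{ijk}R_{jk})_{;i} - \frac12|C|^2$ via (\ref{eqn2018-1-23-1}), moves the weight $(1+f)^2$ inside the divergence using the product rule, and integrates. As you observe, only the constancy of $s$ and the closedness of $M$ are used, so the identity holds for any smooth weight $\varphi$ in place of $1+f$, with $\na\varphi$ in place of $\na f$.
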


Next, the result for the CPE case corresponding to Lemma~\ref{lem2026-4-24-9}  is the following.

\begin{lem}\label{lem2026-4-25-3}
Let $(M^n, g, f)$ be a solution of the CPE. Then
\begin{eqnarray}
 & & \int_M \left( \frac23 (1+f)^2 E + \frac12 (1+f)^2 |D\ric|^2 + \frac34 (1+f)^2 |C|^2\right) \, dv_g\label{eqn2026-4-22-9-10-10}\\
& & \qquad = \int_M (1+f)\langle {\rm div}^2C, \na f\rangle\, dv_g.\nonumber
\end{eqnarray}
\end{lem}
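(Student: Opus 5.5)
The plan is to reproduce the proof of Lemma~\ref{lem2026-4-24-9} with $f$ replaced by $1+f$. We combine Lemma~\ref{lem2026-4-25-1} and Lemma~\ref{lem2026-4-25-2} to compute the two integrals $\int_M (1+f)\langle i_{\na f}C,\ric\rangle$ and $\int_M (1+f)^2\langle {\rm div}\,C,\ric\rangle$ separately. We then feed them into a CPE analogue of the third identity of Lemma~\ref{lem2026-4-22-28-1}.

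\emph{Step 1 (solving the linear system).} Subtracting Lemma~\ref{lem2026-4-25-2} from Lemma~\ref{lem2026-4-25-1} isolates the first integral:
$$\int_M (1+f)\langle i_{\na f}C,\ric\rangle = \int_M\Big(\tfrac32(1+f)^2E+\tfrac12(1+f)^2|D\ric|^2+\tfrac14(1+f)^2|C|^2\Big).$$
Substituting this back into Lemma~\ref{lem2026-4-25-2} gives
$$\int_M (1+f)^2\langle{\rm div}\,C,\ric\rangle=-\int_M\big(3(1+f)^2E+(1+f)^2|D\ric|^2+(1+f)^2|C|^2\big).$$
Adding the two integrals, we find that $-\int_M\big((1+f)\langle i_{\na f}C,\ric\rangle+(1+f)^2\langle {\rm div}\,C,\ric\rangle\big)$ equals
$$\int_M\Big(\tfrac32(1+f)^2E+\tfrac12(1+f)^2|D\ric|^2+\tfrac34(1+f)^2|C|^2\Big).$$

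\emph{Step 2 (analogue of Lemma~\ref{lem2026-4-22-28-1}).} We need to show
$$\int_M(1+f)\langle i_{\na f}C,\ric\rangle+\int_M(1+f)^2\langle{\rm div}\,C,\ric\rangle=-\int_M(1+f)\langle{\rm div}^2C,\na f\rangle.$$

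For the first term, use ${\rm div}(i_{\na f}C)=-i_{\na f}{\rm div}\,C$. This holds because $C_{ijk}f_{ij}=0$, by the skew symmetry of $C$ in its first two indices. Then compute ${\rm div}(i_{\na f}C(\na f))$. By (\ref{eqn2026-4-24-20}), $Ddf=(1+f)\ric-\frac{s(nf+n-1)}{n(n-1)}g$, and $i_{\na f}C$ is traceless, so the $g$-term drops out. This yields $\int_M(1+f)\langle i_{\na f}C,\ric\rangle=\int_M{\rm div}\,C(\na f,\na f)$.

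For the second term, ${\rm div}\,C$ is traceless, so $(1+f)^2\langle{\rm div}\,C,\ric\rangle=(1+f)\langle{\rm div}\,C,Ddf\rangle$. Integrating by parts gives $-\int_M{\rm div}\,C(\na f,\na f)-\int_M(1+f)\langle{\rm div}^2C,\na f\rangle$. Summing the two terms, the ${\rm div}\,C(\na f,\na f)$ contributions cancel, which is the claimed identity.

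Combining Steps 1 and 2 gives the stated identity. One caveat: the computation naturally produces the coefficient $\tfrac32$ in front of $(1+f)^2E$, exactly as in Lemma~\ref{lem2026-4-24-9}, rather than the $\tfrac23$ printed in the statement. I expect the $\tfrac23$ is a typographical slip. In any case, under $K_M\ge0$ we have $E\ge0$ by Lemma~\ref{lemma2}, so either coefficient serves the purpose in the proof of Theorem~\ref{thm2026-4-24-13}.

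The main point needing care is Step 2. One must check that the extra $g$-term in the CPE, which is absent in the vacuum static equation, is annihilated in every pairing. This relies only on the tracelessness of $i_{\na f}C$ and of ${\rm div}\,C$. Everything else is the routine linear algebra of Step 1.
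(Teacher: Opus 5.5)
Your proposal is correct and follows the paper's argument: the paper likewise solves the linear system from Lemmas~\ref{lem2026-4-25-1} and~\ref{lem2026-4-25-2}, then invokes the CPE analogue of Lemma~\ref{lem2026-4-22-28-1}, stated there as Lemma~\ref{lem2026-4-22-287} without proof. You supply that lemma's verification, including the check that the extra $g$-term vanishes because $i_{\na f}C$ and ${\rm div}\,C$ are traceless. You are also right about the coefficient: the paper's own computation yields $\tfrac32(1+f)^2E$, which is what the proof of Theorem~\ref{thm2026-4-24-13} uses, so the printed $\tfrac23$ is a typo.
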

\begin{proof}
By Lemma~\ref{lem2026-4-25-1} and Lemma~\ref{lem2026-4-25-2}, we have
\bea
- \int_M (1+f)^2 \langle {\rm div}\,  C, {\ric}\rangle \, dv_g= \int_M \left( 3(1+f)^2 E  + (1+f)^2 |D\ric|^2 + (1+f)^2|C|^2\right) \, dv_g. \label{eqn2026-4-24-7-10}
\eea
and
\bea
\int_M (1+f) \langle i_{\na f}C, {\ric}\rangle\, dv_g = \int_M \left( \frac32 (1+f)^2 E  + \frac12 (1+f)^2 |D\ric|^2 + \frac14 (1+f)^2|C|^2\right)\, dv_g. \label{eqn2026-4-24-8-10}
\eea
Subtracting these two, we obtain
\bea
&&\int_M \left( \frac32 (1+f)^2 E  + \frac12 (1+f)^2 |D\ric|^2 + \frac34 (1+f)^2|C|^2 \right) \, dv_g\\
&&\qquad
= -\int_M \left( (1+f) \langle i_{\na f}C, {\ric}\rangle +(1+f)^2 \langle {\rm div}\, C, {\ric}\rangle\right) \, dv_g.
  \eea
 Thus, the equation (\ref{eqn2026-4-22-9-10-10}) follows from Lemma~\ref{lem2026-4-22-287} below.
\end{proof}

\begin{lem}\label{lem2026-4-22-287}
We have
$$
{\rm div}(i_{\na f}C) = - i_{\na f}{\rm div}C,
$$
$$
\int_M (1+f)\langle i_{\na f}C, {\ric}\rangle \, dv_g= \int_M {\rm div}\, C(\na f, \na f)\, dv_g,$$
and
$$
\int_M \left((1+f)^2 \langle {\rm div}\,  C, {\ric}\rangle + {\rm div}\, C(\na f, \na f) \right)\, dv_g = - \int_M  (1+f)\langle {\rm div}^2 C, \na f\rangle\, dv_g.
$$
\end{lem}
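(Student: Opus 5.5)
The plan is to mimic the proof of Lemma~\ref{lem2026-4-22-28-1}, with $f$ replaced by $1+f$ wherever the potential multiplies ${\rm Ric}$. The only structural input is the CPE in the form (\ref{eqn2026-4-24-20}), which gives
$$Ddf=(1+f){\rm Ric}-\frac{s(nf+(n-1))}{n(n-1)}g.$$
The term proportional to $g$ will always be annihilated, because $C$ and ${\rm div}\,C$ are traceless. Note also that $\na(1+f)=\na f$.

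For the first identity, I write ${\rm div}(i_{\na f}C)(Y)=D_{e_i}(C(\na f,e_i,Y))$ and expand. This gives $C(D_{e_i}\na f,e_i,Y)+(D_{e_i}C)(\na f,e_i,Y)$. The first term is a contraction of the symmetric tensor $Ddf$ with the first two slots of $C$, which are skew, so it vanishes. In the second term, skew-symmetry of $C$ in its first two slots gives $-(D_{e_i}C)(e_i,\na f,Y)=-{\rm div}\,C(\na f,Y)$. This is exactly the computation of Section 3 and uses nothing about the equation.

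For the second identity, I compute ${\rm div}\big(i_{\na f}C(\na f)\big)$, where $i_{\na f}C(\na f)$ denotes the vector field $Y\mapsto C(\na f,Y,\na f)$. By the first identity this equals $-\,{\rm div}\,C(\na f,\na f)+\langle i_{\na f}C,Ddf\rangle$. Substituting $Ddf$ from the CPE and using $\langle i_{\na f}C,g\rangle=C(\na f,e_i,e_i)=0$, the last term becomes $(1+f)\langle i_{\na f}C,{\rm Ric}\rangle$. Integrating over the closed manifold $M$ then yields the claim.

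For the third identity, I start from $(1+f)^2\langle{\rm div}\,C,{\rm Ric}\rangle=(1+f)\langle{\rm div}\,C,(1+f){\rm Ric}\rangle$. Since ${\rm div}\,C$ is traceless, I may replace $(1+f){\rm Ric}$ by $Ddf$. Integrating by parts gives $-\int_M\langle{\rm div}((1+f){\rm div}\,C),\na f\rangle$. The Leibniz rule expands this as $-\int_M\big({\rm div}\,C(\na f,\na f)+(1+f)\langle{\rm div}^2C,\na f\rangle\big)$; here I need the index convention for ${\rm div}$ of ${\rm div}\,C$ to match the one in Section 3, so that the first term is precisely $i_{\na f}{\rm div}\,C$ evaluated at $\na f$. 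Rearranging gives the stated identity.

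I expect the main obstacle to be bookkeeping rather than any new idea. One point is that tracelessness of ${\rm div}\,C$ follows from $C_{iij}=0$ together with commuting the divergence with the trace. The other is that the constant term $\frac{s(n-1)}{n(n-1)}$ in the CPE drops out in both places for the same reason that $\frac{sf}{n-1}g$ dropped out in the vacuum static case.
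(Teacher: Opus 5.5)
Your proposal is correct and follows essentially the same route as the paper, which obtains this lemma by rerunning the proof of Lemma~\ref{lem2026-4-22-28-1} with $f$ replaced by $1+f$: skew-symmetry of $C$ gives the first identity, the divergence of the $1$-form $Y\mapsto C(\na f,Y,\na f)$ gives the second, and integrating $(1+f)\langle {\rm div}\,C, Ddf\rangle$ by parts gives the third, with every $g$-term killed by tracelessness. One harmless slip: tracelessness of ${\rm div}\,C$ uses $C_{ijj}=0$ (trace over the last two slots), not $C_{iij}=0$; the paper records both facts, and you use the correct one in the second identity.
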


\begin{lem}\label{lem2026-4-22-35}
We have the following
$$
\int_M  (1+f)\langle {\rm div}^2 C, \na f\rangle\, dv_g = - \frac12 \int_M (1+f)^2 {\rm div}^3 C\, dv_g.
$$
\end{lem}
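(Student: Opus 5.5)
The identity in Lemma~\ref{lem2026-4-22-35} is an integration by parts, the same one used for Lemma~\ref{lem2026-4-22-35-1} in the vacuum static case with $f$ replaced by $1+f$. The plan is to apply the divergence theorem to the vector field $(1+f)^2\,{\rm div}^2 C$ on the closed manifold $M$. Here ${\rm div}^2 C$ is the $1$-form (equivalently, vector field) obtained by taking the divergence of $C$ twice, and ${\rm div}^3 C$ is its divergence.

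By the product rule,
\[
{\rm div}\big((1+f)^2\,{\rm div}^2 C\big) = \langle {\rm div}^2 C, \na (1+f)^2\rangle + (1+f)^2\,{\rm div}^3 C .
\]
Since $\na (1+f)^2 = 2(1+f)\na f$, this becomes
\[
{\rm div}\big((1+f)^2\,{\rm div}^2 C\big) = 2(1+f)\langle {\rm div}^2 C, \na f\rangle + (1+f)^2\,{\rm div}^3 C .
\]
Now integrate over $M$. Because $M$ is compact without boundary, the integral of the left-hand side vanishes, which leaves
\[
2\int_M (1+f)\langle {\rm div}^2 C, \na f\rangle\, dv_g = -\int_M (1+f)^2\,{\rm div}^3 C\, dv_g .
\]
Dividing by $2$ gives the statement.

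I do not expect a genuine obstacle. The only point to check is that the contraction order in ${\rm div}^2 C$ matches the one used in Lemma~\ref{lem2026-4-22-287}, so that ${\rm div}^3 C$ is indeed the divergence of the $1$-form paired with $\na f$. Once that convention is fixed, the computation is immediate.
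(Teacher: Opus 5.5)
Your proposal is correct and is exactly the paper's argument. It applies the divergence theorem to $(1+f)^2\,{\rm div}^2 C$, expands by the product rule using $\nabla(1+f)^2 = 2(1+f)\nabla f$, and integrates over the closed manifold $M$, mirroring Lemma~\ref{lem2026-4-22-35-1} with $f$ replaced by $1+f$.
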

\begin{proof}
Since
\bea
{\rm div}((1+f)^2 {\rm div}^2 C) &=& \langle {\rm div}^2 C, \na (1+f)^2\rangle + (1+f)^2 {\rm div}^3 C\\
&=&
2(1+f)\langle {\rm div}^2 C, \na f\rangle + (1+f)^2 {\rm div}^3 C,
\eea
we get, by integrating,
$$
\int_M (1+f) \langle {\rm div}^2 C, \na f\rangle\, dv_g = - \frac12 \int_M (1+f)^2 {\rm div}^3 C\, dv_g.$$

\end{proof}

\vspace{.12in}
\noindent
{\bf Proof of Theorem~\ref{thm2026-4-24-13}}\, 
From Lemma~\ref{lem2026-4-25-3} and Lemma~\ref{lem2026-4-22-35}, we obtain
\bea
\lefteqn{\int_M \left( \frac32 (1+f)^2E + \frac12 (1+f)^2 |DRic|^2 +  \frac34 (1+f)^2 |C|^2 \right) \, dv_g}\\
&=& - \frac12\int_M (1+f)^2 {\rm div}^3 C\, dv_g.
\eea
Since ${\rm div}^4 \mathcal W \geq 0$ and ${\rm div}^3 C \geq 0$ with $E\ge 0$, we have
$$
C = 0.
$$
In other words, the CPE metric has harmonic curvature (i.e., ${\rm div} R =   d^D {\ric} = C =0)$. Applying the main result in 
 \cite{y-c-h, erra}, we can conclude  that  $(M, g)$ is isometric to a standard sphere. 
\hfill{\large $\Box$}

\end{document}